\numberwithin{equation}{section}
\newtheorem{theorem}{Theorem}[section]
\newtheorem{lemma}[theorem]{Lemma}
\newtheorem{corollary}[theorem]{Corollary}
\newtheorem{claim}[theorem]{Claim}
\newtheorem*{maintheorem}{Main Theorem}
\theoremstyle{definition}
\newtheorem{definition}[theorem]{Definition}
\newtheorem*{notation}{Notation}
\newtheorem{remark}[theorem]{Remark}
\theoremstyle{remark}
\newtheorem{example}[theorem]{Example}
\newcommand{\init}{\operatorname{in}}
\newcommand{\functorslot}{\underline{\hspace{1ex}}}
\newcommand{\height}{\operatorname{height}}
\newcommand{\Ext}{\operatorname{Ext}}
\renewcommand{\geq}{\geqslant}
\renewcommand{\leq}{\leqslant}
\begin{document}
\title[Double determinantal varieties]{Gr\"obner bases and the Cohen-Macaulay property of Li's double determinantal varieties}

\author[Nathan Fieldsteel]{Nathan Fieldsteel}
\address{Department of Mathematics, University of Kentucky, Lexington, KY 40506 USA}
\email{nathan.fieldsteel@uky.edu}

\author[Patricia Klein]{Patricia Klein}
\address{Department of Mathematics, University of Minnesota, Minneapolis, MN 55455 USA}
\email{klein847@umn.edu}

\maketitle

\begin{abstract}
We consider double determinantal varieties, a special case of Nakajima quiver varieties.  Li conjectured that double determinantal varieties are normal, irreducible, Cohen-Macaulay varieties whose defining ideals have a Gr\"obner basis given by their natural generators.  We use liaison theory to prove this conjecture in a manner that generalizes results for mixed ladder determinantal varieties.  We also give a formula for the dimension of a double determinantal variety.
\end{abstract}

\section{Introduction}

In order to study Kac-Moody Lie algebras, Nakajima introduced what are now called Nakajima quiver varieties, which are quantized enveloping algebras of Kac-Moody Lie algebras and can be defined in terms of an appropriate quiver.  These quiver varieties have been shown to provide a powerful perspective for bringing geometric tools to bear on representation theory.  For an introduction to the construction and uses of Nakajima quiver varieties, see \cite{Nak16} or \cite{Gin08}.  However, even small examples of Nakajima quiver varieties can be very challenging to study.  

Recently, Li defined \emph{double determinantal varieties}, a special case of Nakajima quiver varieties.  A double determinantal variety is defined by the vanishing of minors of some size $s$ in a concatenation of finitely many $m \times n$ matrices glued along their size $m$-edges together with the vanishing of minors of a possibly different size $t$ in a concatenation of the same matrices along their length $n$ edges (see Definition \ref{doubleDeterminantalVarieties}).  For an explanation of how double determinantal varieties are instances of Nakajima quiver varieties, see \cite{IL}.  Li conjectured that with respect to any diagonal term order (see Definition \ref{diagonal}), the natural generators of the ideal defining a double determinantal variety form a Gr\"obner basis and that double determinantal varieties are irreducible, normal, and Cohen-Macaulay.  

Although Li's conjecture was motivated by the combinatorics and geometry surrounding Nakajima quiver varieties, it also has a natural home in the commutative algebra literature, in particular the literature on \emph{mixed ladder determinantal varieties}, which are defined by the vanishing of determinants of varying sizes in a matrix of indeterminates after possibly excluding some variables from one corner or two opposite corners in a ladder shape.  

Determinantal varieties and their generalizations to ladder determinantal varieties exhibit close connections to geometry, representation theory, and combinatorics.  Classical determinantal rings, by which we mean quotients of polynomial rings by minors of a fixed size in one matrix of indeterminates, arose in the context of invariant theory and were first shown to be normal Cohen-Macaulay domains in 1971 by Hochster and Eagon \cite{HE71}.  Ladder determinantal varieties arose in Abhyankar's study of Schubert varieties of flag manifolds.  Ladder determinantal rings are known to be normal \cite{Con95} Cohen-Macaulay \cite{HT92} domains \cite{Nar86}.  One-sided ladder determinantal varieties are also known, primarily in the combinatorics literature, as vexillary matrix Schubert varieties.  A further generalization, mixed ladder determinantal varieties, were shown by Gonciulea and Miller \cite{GM00} to correspond to opposite cells in Schubert varieties in flag varieties of a certain type, from which they infer normality and Cohen-Macaulyness. Using proof techniques we will borrow in this paper, Gorla \cite{Gor07} showed that mixed ladder determinantal varieties are irreducible, arithmetically Cohen--Macaulay, and glicci (see Definition \ref{glicci}) and characterized when they are arithmetically Gorenstein.  Gorla used a less restrictive definition of mixed ladder determinantal variety than did Gonciulea and Miller, and it is Gorla's notion that we will use in this paper (see Definition \ref{mixedLadder}).  Connections between these varieties and liaison theory was studied further in \cite{GMN13}, whose key lemma appears at the top of our Section \ref{specialsection} and is used throughout the current work.

However, the literature on ladder determinantal ideals always insists that a ladder be considered within a matrix of distinct indeterminates or a symmetric or skew-symmetric matrix of indeterminates. The case of double determinantal varieties does not have any of these structures, and so new strategies are required.  In this article, we adapt techniques historically used to study mixed ladder determinantal ideals in order to prove Li's conjecture.  The main theorem of this article is \begin{maintheorem}\label{maintheorem}
The natural generators of a double determinantal ideal form a Gr\"obner basis under any diagonal term order.  Double determinantal varieties are irreducible, normal, and arithmetically Cohen-Macaulay.  In addition, they are glicci.
\end{maintheorem} 

A different, more elementary proof of Li's conjecture, due to Li and Illian, precedes ours and is currently in preparation for publication.  The result in the  special case of $2$-minors and 2 matrices, due to Conca, De Negri, and Stojanac, appears in a forthcoming paper studying the secant varieties of the triple Segre product \cite{CDS}.

This paper is structured as follows: Section \ref{specialsection} contains a proof of a special case of the Cohen-Macaulay and reduced components of the Main Theorem.  This special case is explained in full detail without explicit reference to liaison theory.  It is intended to be readable to a mathematician with basic familiarity with Hilbert functions and the Cohen-Macaulay property but possibly none with liaison theory.  The goal is for this special case to serve as an example of how to use the style of induction common in liaison theory arguments, which is a natural approach for questions involving determinants, for those not desiring to learn the full machinery of liaison theory or to take theorems from that literature on faith.  A key technical lemma that appears in Section \ref{specialsection} is used throughout the paper.  Section \ref{generalsection} contains a proof of the general case of this paper's Main Theorem using the machinery of liaison theory directly.  It also contains a dimension formula for double determinantal varieties.   

\bigskip

\noindent\textbf{Acknowledgements}: The authors are grateful to Uwe Nagel, Oliver Pechenik, Jenna Rajchgot, Aldo Conca, and Claudia Miller for helpful conversations.  They are also grateful to Li Li for very helpful communications, including comments on a previous draft of this paper.  

\section{Preliminaries}
\label{prelim}

We use this section to record some definitions that we will need throughout this paper.  We provide references for a broad background on Hilbert functions (\cite[Section 13]{Mat89}), the Cohen--Macaulay property (\cite[Section 17] {Mat89}), and Gr\"obner bases (\cite{CLO94}).  

Our primary objects of study in this paper are Li's double determinantal varieties.

\begin{definition}\label{doubleDeterminantalVarieties}
  Fix $r, m, n \geq 1$, and let $X_q = (x^q_{i,j})$ with $1 \leq q \leq r$ be $m \times n$ matrices of distinct indeterminates.  With $[n] = \{1, \ldots, n\}$, let $R = K[x^q_{i,j} \mid i \in [m], j \in [n], q \in [r]]$ be the standard graded polynomial ring in the indeterminates that appear in the matrices $X_q$ over the perfect field $K$. Let \(H\) be the horizontal concatenation of these matrices, i.e., the \(m \times rn\) matrix

  \[H =
    \begin{pmatrix}
      X_1 \cdots X_r\\
    \end{pmatrix},
  \]

  and let \(V\) be their vertical concatenation, i.e., the \(rm \times n\) matrix

  \[V =
    \begin{pmatrix}
      X_{1} \\
      \vdots \\
      X_{r} \\
    \end{pmatrix}.  
  \]

  The ideal $J$ generated by the $s$-minors of $H$ together with the $t$-minors of $V$ is called a \textit{double determinantal ideal}, and the variety cut out by $J$ is called a \emph{double determinantal variety}.  Because $J$ homogeneous, we may think of the variety it defines as either an affine variety or a projective variety.  
  
  \end{definition}
  
  Note that part of the Main Theorem is that $J$ is normal, which implies reduced, and so the vanishing locus of \(J\) is indeed a variety.  
  
  Our first step in approaching the Main Theorem will be to show that the natural generators of a double determinantal ideal form a Gr\"obner basis under any diagonal term order.
  
  \begin{definition}\label{diagonal}
 In the context of a double determinantal ideal $J$, we call a term order $\sigma$ \emph{diagonal} if for every square submatrix of $H$ or of $V$ the leading term of its determinant with respect to $\sigma$ is the product of the entries along its main diagonal.
 \end{definition}

Such term orders exist.  One example is the graded lexicographic ordering induced by the reading order on the entries of $V$.  

\begin{definition}\label{mixedLadder}
Let $X = (x_{i,j})$ be an $m \times n$ matrix of indeterminates, and fix $h < \min\{m,n\}$, and let $R = K[x_{i,j} \mid i \in [m], j \in [n]]$ be the corresponding polynomial ring over the field $K$ (which is typically assumed to be algebraically closed or, at least, perfect).  Choose $1 \leq i_1 \leq \cdots \leq i_h \leq m$ and $1 \leq j \leq \cdots \leq j_h \leq n$, and for each $1 \leq a \leq h$ define $L_a = \{x_{i,j}  \mid i \leq i_a, j \leq j_a\}$ as a submatrix of $X$ and $L = \cup_{a = 1}^h L_a$.  Fix $\underline{t} = (t_1, \ldots, t_h)\} \in \mathbb{N}^h$.  Let $I_{\underline{t}}(L)$ be the ideal generated by the $t_a$-minors of $L_a$ for $1 \leq a \leq h$.   A one-sided \emph{mixed ladder determinantal variety} is the variety determined by $I_{\underline{t}}(L)$.
\end{definition}

Notice that our convention in this paper will be that every $L_a$ contains $x_{1,1}$ and is determined by its southeast corner $x_{i_a,j_a}$.  Elsewhere in the literature, particularly among those working with anti-diagonal term orders, the convention is that all $L_a$ include $x_{m,1}$ and are determined by their northeast corners.  Mixed ladder deterinantal varieties have also been studied in the case of two-sided ladders, as in \cite{Gor07}.  

Liaison theory provides a set of tools that has been used to study mixed ladder determinantal varieties.  We refer the reader to \cite{MN02} for a thorough overview and give a brief sketch below of some of the basic ideas as they relate to the approach in this paper.  Liaison theory studies unions of schemes in projective space.  We ask which desirable properties of the scheme $C_1$ can be inferred about $C_2$ if we know that $X = C_1 \cup C_2$ is well behaved.  Most relevant to this paper, if $X$ is Gorenstein and $C_1$ is Cohen-Macaulay, then $C_2$ must be Cohen-Macaulay as well.  More precisely,

\begin{definition}
Let $C_1, C_2, X \subseteq \mathbb{P}^n$ be subschemes defined by $I_{C_1}$, $I_{C_2}$, and $I_{X}$, respectively with $X$ arithmetically Gorenstein.  If $I_X \subseteq I_{C_1} \cap I_{C_2}$ and if $[I_X:I_{C_1}] = I_{C_2}$ and $[I_X: I_{C_2}] = I_{C_1}$, then $C_1$ and $C_2$ are \emph{directly algebraically $G$-linked}.
\end{definition}

Here the $G$ in $G$-linked stands for Gorenstein.  It contrasts with $CI$-linkage, in which we would insist that $X$ be a complete intersection.  We may generate an equivalence relation by these direct links.  One way to establish that two schemes are in the same $G$-liaison class is by giving an \emph{elementary $G$-biliaison} on their defining ideals.

\begin{definition}
Let $I$ and $J$ be homogeneous, saturated, height unmixed ideals in a standard graded polynomial ring $R$ with \(\height(I) = \height(J)\).  Then $J$ is obtained by an \emph{elementary G-biliaison} if there exists $\ell \in \mathbb{Z} $ and a homogeneous Cohen-Macaulay ideal $N \subseteq I \cap J$ with $(R/N)_P$ Gorenstein for every minimal prime $P$ of $N$ and with \(\height(N) = \height(J)-1\) and $J/N \cong [I/N](-\ell)$ as graded $R/N$-modules.  
\end{definition}

We refer the reader to \cite{Har07} and \cite{KMM+01} for a treatment of biliaison.  One particularly useful type of $G$-biliaison is a \emph{basic double $G$-link}.

\begin{definition}
Let $A \subseteq B$ be homogeneous ideals in the standard graded polynomial ring $R$ with $\height(A) = \height(B)-1$, $B$ height unmixed, and $A$ Cohen-Macaulay with $(R/A)_P$ Gorenstein for every minimal prime $P$ of $A$.  Let $f$ be a homogeneous element of $R$ that is not a zerodivisor on $R/A$, and define $C = A+fB$.  Then $C$ is a \emph{basic double $G$-link} of $B$ on $A$.  
\end{definition}

An object of much study is the Gorenstein liaison class of a complete intersection.  

\begin{definition} \label{glicci}
We say that a subscheme $C_1 \subseteq \mathbb{P}^n$ is \emph{glicci} if it can be obtained from some complete intersection $C_2$ by finitely many direct algebraic $G$-links.  
\end{definition}

It is known that a subscheme that is glicci is also arithmetically Cohen-Macaulay.  The converse is conjectured but not known.  Our conclusion that double determinantal ideals are, indeed, glicci adds to the literature of evidence in support of this conjecture.  Through the work of \cite{GM00}, \cite{Gor07}, and \cite{MN02}, among others, we see that questions about ideals generated by determinants are particularly amenable to inductive arguments using liaison theory.  Moreover, the algebra of the induction can often be reflected by a vertex decomposition of the associated Stanley-Reisner complexes (see \cite[Section 1.1]{MS04} for background).  In this paper, we use first the structure of this style of induction without direct appeal to liaison theory (Section \ref{specialsection}) and then make use of the full language of liaison theory (Section \ref{generalsection}) to solve a new problem arising from combinatorics, geometry, and representation theory.

\section{The special case of two matrices and maximal minors}
\label{specialsection}
In this section, we will use a style of inductive argument common in liaison theory as it applies to questions that are determinantal in nature to prove a special case of the main result of this paper.  The proof of the main result, which appears in Section \ref{generalsection}, is at its heart similar to this approach but is substantially more subtle.  We begin by recalling a key lemma \cite[Lemma 1.12]{GMN13} due to Gorla, Migliore, and Nagel.  Although they phrase the lemma in terms of liaison theory, that language is not essential.  We present the lemma below with only the necessary hypotheses and with a detailed proof suitable for broad audience.

For a $\mathbb{Z}$-graded module $M$ over a $\mathbb{Z}$-graded algebra $R$ over the field $K$, we will use $[M]_d$ to mean the degree $d$ summand of $M$.  For a homogeneous ideal $I$, we use $H_I(d)$ to denote the Hilbert function of $I$ evaluated at $d$, i.e., the dimension of $[R/I]_d$ as a $K$-vector space.  We use $\init(I)$ to denote the initial ideal of $I$ with respect to a specified term order.

\begin{lemma}\cite[Lemma 1.12]{GMN13} \label{givesGrobner}
Let $I$, $J$, and $N$ be homogeneous ideals of the graded algebra $R$ over the field $K$ with $N \subseteq I \cap J$.  Let $A$, $B$, and $C$ be homogeneous ideals of $R$ such that, with respect to some term order $\sigma$, $A \subseteq C \subseteq \init(J),$ $A = \init(N)$, and $B = \init(I)$.  Suppose that there exists $\ell \in \mathbb{Z}$ such that $[J/N]_d \cong [I/N]_{d-\ell}$ and that $[C/A]_d \cong [B/A]_{d-\ell}$ for all $d \in \mathbb{Z}$.  Then $C = \init(J)$.  
\end{lemma}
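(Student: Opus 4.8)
The plan is to exploit the containments and the isomorphisms to pin down $\init(J)$ by a dimension count in each graded degree. First I would observe that since $A = \init(N)$ and $B = \init(I)$, passing to initial ideals preserves Hilbert functions, so $H_N(d) = H_A(d)$ and $H_I(d) = H_B(d)$ for all $d$, and likewise $H_J(d) = H_{\init(J)}(d)$. The key numerical identity comes from the short exact sequences of graded vector spaces $0 \to [J/N]_d \to [R/N]_d \to [R/J]_d \to 0$ and $0 \to [I/N]_d \to [R/N]_d \to [R/I]_d \to 0$, which give $\dim_K [J/N]_d = H_N(d) - H_J(d)$ and $\dim_K [I/N]_d = H_N(d) - H_I(d)$. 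Combining these with the hypothesis $[J/N]_d \cong [I/N]_{d-\ell}$ yields
\[
H_N(d) - H_J(d) = H_N(d-\ell) - H_I(d-\ell).
\]
Running the identical argument with $A \subseteq C$ and $A \subseteq B$ in place of $N \subseteq J$ and $N \subseteq I$, together with $[C/A]_d \cong [B/A]_{d-\ell}$, gives $H_A(d) - H_{R/C}\text{-count}$... more precisely $\dim_K[C/A]_d = H_A(d) - H_{R/C}(d)$ where $H_{R/C}(d) := \dim_K[R/C]_d$, hence
\[
H_A(d) - H_{R/C}(d) = H_A(d-\ell) - H_B(d-\ell).
\]

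Now I would feed in the equalities $H_A = H_N$ and $H_B = H_I$ coming from the fact that $A,B$ are initial ideals of $N,I$. Subtracting the two displayed identities and cancelling the $H_N$-terms (which equal the $H_A$-terms) shows
\[
H_{R/C}(d) = H_J(d) = H_{\init(J)}(d) \qquad \text{for all } d \in \mathbb{Z}.
\]
Finally, since $C \subseteq \init(J)$ by hypothesis, in each degree $d$ we have an inclusion of finite-dimensional $K$-vector spaces $[R/\init(J)]_d \hookrightarrow [R/C]_d$ (because $C \subseteq \init(J)$ gives a surjection $R/C \twoheadrightarrow R/\init(J)$, so the inclusion is really on the quotients the other way — let me state it correctly): the surjection $R/C \twoheadrightarrow R/\init(J)$ is a degreewise surjection of vector spaces of equal dimension, hence an isomorphism in every degree, so $C = \init(J)$.

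I expect the only real subtlety to be bookkeeping: making sure the roles of $N \subseteq I \cap J$ versus $A \subseteq C$, $A \subseteq B$ are matched up correctly and that the shift by $\ell$ is applied consistently on both sides, and recording clearly why $H_A(d) = H_N(d)$ and $H_B(d) = H_I(d)$ (this is the standard fact that a Gröbner degeneration preserves Hilbert function). There is no geometric or homological input needed beyond additivity of dimension in short exact sequences of graded vector spaces and the fact that a surjection between finite-dimensional spaces of equal dimension is an isomorphism; the lemma is entirely a Hilbert-function comparison, and the hypotheses have been arranged precisely so that all unknown terms cancel.
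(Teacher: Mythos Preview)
Your proposal is correct and follows essentially the same approach as the paper: both arguments reduce to showing $H_C(d)=H_J(d)$ for all $d$ by chasing the short exact sequences $0\to J/N\to R/N\to R/J\to 0$ (and its analogues for $I$, $C$, $B$), invoking the given graded isomorphisms to handle the shift by $\ell$, and using that passing to initial ideals preserves Hilbert functions. The paper presents this as a single chain of equalities, while you separate it into two displayed identities and subtract, but the content is identical.
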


\begin{proof}
Because $C \subseteq \init(J)$, it is enough to show that $H_C(d) = H_J(d)$ for all $d \in \mathbb{Z}$.  We compute 

\begin{align*}
H_J(d)
&=  H_N(d)-\dim_K([J/N]_d) && (0 \rightarrow J/N \rightarrow R/N \rightarrow R/J \rightarrow 0)\\
&=  H_N(d)-\dim_K([I/N]_{d-\ell}) && ([J/N]_d \cong [I/N]_{d-\ell})\\
&= H_N(d)-H_N(d-\ell)+H_I(d-\ell) && (0 \rightarrow I/N \rightarrow R/N \rightarrow R/I \rightarrow 0)\\
& =H_A(d)-H_A(d-\ell)+H_B(d-\ell) && (A = \init(N) \mbox{ and } B = \init(I))\\
& = H_A(d) - \dim_K([B/A]_{d-\ell}) && (0 \rightarrow B/A \rightarrow R/A \rightarrow R/B \rightarrow 0)\\
& = H_A(d) - \dim_K([C/A]_d) && ([C/A]_d \cong [B/A]_{d-\ell})\\
& = H_C(d) && (0 \rightarrow C/A \rightarrow R/A \rightarrow R/C \rightarrow 0).
\end{align*}

\end{proof}

Although it is only formally required that $C$ be homogeneous, in applications of this lemma, $C$ will be already known to be monomial.  It is automatic that $A$ and $B$ are homogeneous since they are assumed to be monomial ideals.  The main use of this lemma is to show that the initial terms of the conjectured Gr\"obner basis for $J$ indeed generate the initial ideal of $J$.  

If $X$ and $Y$ are two $m \times n$ matrices, then we will let $H = (XY)$ be their $m \times 2n$ horizontal concatenation and $V = \begin{pmatrix} X\\ Y \end{pmatrix}$ be their $2m \times n$ vertical concatenation.  For a matrix $Z$ and integer $t$, let $I_t(Z)$ denote the ideal generated by the $t$-minors of $Z$.  

\begin{theorem}\label{maximalCase}
Let $X = (x_{i,j})$ and $Y = (y_{i,j})$ be $m \times n$ matrices of distinct indeterminates, and let $R = K[x_{i,j}, y_{i,j} \mid i \in [m], j \in [n]]$ be the standard graded polynomial ring in $2mn$ variables over a field $K$.  Let $J = I_m(H)+I_n(V)$ be an ideal of $R$.  Then the natural generators of $J$ form a Gr\"obner basis under any diagonal term order.
\end{theorem}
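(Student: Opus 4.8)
The plan is to reduce the Gröbner basis assertion to an equality of Hilbert functions and then establish that equality by the liaison-style induction that Lemma~\ref{givesGrobner} is built to support. Fix an arbitrary diagonal term order $\sigma$ once and for all; it remains diagonal on every submatrix that appears below. Let $C$ be the monomial ideal generated by the $\sigma$-leading terms of the natural generators of $J$, i.e., by the main diagonals of the $m$-minors of $H$ and of the $n$-minors of $V$. Each such leading term lies in $\init(J)$, so $C \subseteq \init(J)$ automatically, and it suffices to prove $H_C(d) = H_J(d)$ for every $d$, that is, $C = \init(J)$.

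I would prove this by induction on the total size of the data. The base cases are the degenerate ones: if $m = 1$ or $n = 1$, then $I_m(H)$ or $I_n(V)$ is the irrelevant maximal ideal, so the natural generators contain every variable and trivially form a Gröbner basis; if $m > 2n$ or $n > 2m$, then one of the two minor ideals vanishes and $J$ is the ideal of maximal minors of a single generic matrix, whose natural generators are a Gröbner basis under any diagonal term order by classical results; and likewise when the inductive reduction bottoms out at the maximal minors (or the determinant) of a single indeterminate matrix. For the inductive step, the aim is to exhibit $J$ as obtained from a strictly smaller ideal $I$ --- one handled by the inductive hypothesis --- via an elementary $G$-biliaison over a Cohen--Macaulay, generically Gorenstein ideal $N \subseteq I \cap J$ with $\height N = \height J - 1 = \height I - 1$; in favorable cases this has the concrete form of a basic double $G$-link $J = N + fI$ along an outer corner variable $f$ of $H$ (equivalently of $V$). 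Here $N$ and $I$ are determinantal ideals attached to configurations obtained by clipping a corner off the data; because such clipping affects the horizontal and vertical concatenations asymmetrically, leaving the family of Definition~\ref{doubleDeterminantalVarieties}, one runs the entire induction over a slightly larger class of ``staircase-clipped'' concatenations that is closed under the move, proving the Gröbner basis statement and the auxiliary Cohen--Macaulay facts uniformly for that class. The biliaison supplies a degree shift $\ell$ and a graded isomorphism $J/N \cong [I/N](-\ell)$.

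Granting such a reduction, Lemma~\ref{givesGrobner} finishes the argument almost mechanically. The biliaison yields $[J/N]_d \cong [I/N]_{d-\ell}$. By the inductive hypothesis the natural generators of $N$ and of $I$ are themselves Gröbner bases, so $\init(N)$ and $\init(I)$ are the expected monomial ideals; moreover the combinatorial shadow of the reduction --- a vertex decomposition of the Stanley--Reisner complex of the candidate initial ideal, which on the monomial side typically takes the clean basic double $G$-link form $C = \init(N) + f\,\init(I)$ --- furnishes the matching isomorphism $[C/\init(N)]_d \cong [\init(I)/\init(N)]_{d-\ell}$. Since also $\init(N) \subseteq C$ (the natural generators of $N$ are among those of $J$) and $C \subseteq \init(J)$, Lemma~\ref{givesGrobner}, applied with $A = \init(N)$, $B = \init(I)$, and this $\ell$, gives $C = \init(J)$, which is the claim.

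The step I expect to be the main obstacle is the inductive step itself: choosing the clipping corner and the shift $\ell$, pinning down the middle ideal $N$ and the comparison ideal $I$, and proving the biliaison isomorphism $J/N \cong [I/N](-\ell)$ (or, in the basic double $G$-link case, the identity $J = N + fI$) together with the height count and the Cohen--Macaulay and generic-Gorenstein properties of $N$. Unlike the classical ladder determinantal situation, $J$ carries two genuinely interacting families of generators --- the maximal minors of the horizontal concatenation and those of the vertical concatenation --- and a single corner reduction must be simultaneously compatible with both; naive guesses for $N$, such as the ideal generated by the natural generators of $J$ not involving $f$, generally fail, and it is exactly the reconciliation of the two minor families, possible only after enlarging to the ``staircase-clipped'' class, that makes the correct choice delicate and its verification the bulk of the work.
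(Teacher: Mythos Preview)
Your high-level strategy is correct and is essentially the paper's approach: enlarge to a family closed under corner clipping, set up $N \subseteq I \cap J$ with graded isomorphisms $[I/N](-1)\cong J/N$ and $[B/A](-1)\cong C/A$ on the initial side, and invoke Lemma~\ref{givesGrobner}. You have also correctly identified the real difficulty.

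What is missing is precisely that difficulty. Your proposal is an outline, not a proof: the entire content of the theorem lies in the inductive step you describe as ``the bulk of the work'' and then do not do. Concretely, the paper specifies the enlarged family explicitly via four integer parameters $k,\ell,a,b$ governing how many columns (rows) of $Y$ survive in $H$ (in $V$) and how many extra $x$-columns ($x$-rows) have been absorbed; it fixes the corner variable as $y_{m,n-a}$; it defines $N$ by deleting the last $Y$-column of $H$ (and, when $a=0$, also the last $Y$-row of $V$) while keeping minor sizes fixed, and defines $I$ by dropping minor sizes by one in the region strictly northwest of $y_{m,n-a}$. None of this is forced by your description, and several plausible alternatives fail.

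More importantly, the isomorphism $J/N\cong [I/N](-1)$ is not obtained as a basic double link $J=N+fI$ on the ideal side; that identity is false here. The paper builds $\phi$ by sending each $(m-1)$- or $(n-1)$-minor $\delta$ to the minor $C(\delta)$ obtained by adjoining the row and column through $y_{m,n-a}$, and the verification that this extends to an $R$-linear map $I\to J/N$ is genuinely delicate: one must pass to the remainder $f_r$ modulo $N$, use the deterministic division algorithm to get a canonical expression $f_r=\sum \alpha_i\delta_i$, and then argue via a minimal-counterexample induction on leading terms that $R$-linearity survives the non-uniqueness of such expressions. This argument, together with the check that $\phi$ is injective and surjective, occupies most of the paper's proof and is exactly the ``reconciliation of the two minor families'' you allude to but do not carry out.

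One minor point: for the Gr\"obner basis statement alone, Lemma~\ref{givesGrobner} needs only the two graded isomorphisms, not the Cohen--Macaulay or generically Gorenstein properties of $N$; those enter only in the subsequent corollary. Insisting on the full $G$-biliaison hypotheses up front makes your inductive burden heavier than necessary.
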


Before proving this theorem, we notice that the proposed Gr\"obner basis is never reduced since, for example, $x_{1,1} \cdots x_{m-1,m-1}y_{m,n}$ is the leading term of an $m$-minor of $H$ and divides the leading term of at least one $n$-minor of $V$ whenever $m \leq n$ and, even when $m=n$, those two minors themselves are not the same.
\begin{proof}
Without loss of generality, assume that $m \leq n$.  In order to set up an induction, we will need to introduce a broader set of rectangular matrices.  In particular, let $x_{i,j}$ and $y_{i,j}$ with $1 \leq i \leq m+k$, $1 \leq j \leq n+\ell$ be distinct indeterminates with $0 \leq k \leq \ell \leq k+m-2$, $0 \leq a \leq n$, and $-m+2 \leq b \leq 1, m-1$.  Suppose further that if $a = 0$, then $k = \ell$ and $b = 0$ and that if $a>0$, then $b = 1-(\ell-k)$.  If $\sigma$ is a diagonal term order, then we claim that the ideal generated by the $m$-minors of

  \[ 
    H^{m,n}_{k,a} = \begin{pmatrix}
      x_{1,1} & \ldots & x_{1,n+k} & y_{1,1} & \ldots & y_{1,n-a}\\
      \vdots &  & \vdots & \vdots & & \vdots \\
      x_{m,1} & \ldots & x_{m,n+k} & y_{m, 1} & \ldots & y_{m, n-a}\\
    \end{pmatrix}
  \]

\noindent  together with the $n$-minors of

  \[ 
    V^{m,n}_{\ell,b} = \begin{pmatrix}
      x_{1,1} & \ldots & x_{1,n}\\
      \vdots &  & \vdots  \\
      x_{m+\ell,1} & \ldots & x_{m+\ell,n} \\
      y_{1,1} & \ldots & y_{1,n}\\
      \vdots &  & \vdots  \\
      y_{m-b, 1} & \ldots & y_{m-b, n}\\
    \end{pmatrix}
  \]

\noindent in the polynomial ring in the variables appearing in at least one of $H^{m,n}_{k,a}$ or $V^{m,n}_{\ell,b}$ over the field $K$ has a Gr\"obner basis given by the natural generators.    We may assume that $n \leq 2m+\ell-b$ because otherwise the ideal in question is simply $I_m(H^{m,n}_{k,a})$, which is determinantal, and so the result is already known.

  Fix a diagonal term order $\sigma$ and $m$, $n$, $a$, $b$, $k$, and $\ell$ as above.  We will proceed by induction on $n-a$ (with base case $n-a=0$) and on $m$ (with base case $m=1$).  Notice that whenever $a = n$, the ideal in question is that of a mixed ladder determinantal variety, for which the conclusion is known (by \cite[Theorem 4.5.4]{GM00}, \cite[Theorem 1.10]{Gor07}, or \cite[Theorem 5.5]{GMN13}). \footnote{Although Gonciulea and Miller state their theorem for a particular anti-diagonal term-order, their argument is valid for any anti-diagonal term order, in which generality Gorla et al. state their results.  Notice that their one-sided ladders have a unique southwest corner and that ours have a unique northwest corner.  By reflecting our matrices over a horizontal line, one may transition between their ladders and an anti-diagonal term order and ours with a diagonal term order.}  Separately, if $m = 1$, then the ideal in question is given by the variables $x_{1,j}$ with $1 \leq j \leq n+k$ and $y_{1,j}$ with $1 \leq j \leq n-a$ together with the $n$-minors of
  
  \[ 
    \begin{pmatrix}
      x_{2,1} & \ldots & x_{2,n}\\
      \vdots & & \vdots  \\
      x_{1+\ell,1} & \ldots & x_{1+\ell,n} \\
      y_{1,1} & \ldots & y_{1,n} \\
        \vdots & & \vdots  \\
      y_{1-b,1} & \ldots & y_{1-b,n} \\
    \end{pmatrix}.
  \]
\noindent If $b<a$, every $y_{1,j}$ with $1 \leq j \leq n+k$ sits strictly below the main diagonal of any $n \times n$ submatrix of the above matrix whose $n$-minors we are examining, and so the result follows from the ordinary determinantal case together with \cite[Lemma 4.5.3]{GM00}.  If $b=a = 0$, then the ideal in question is simply a cone over the ideal of $n$-minors of 
  \[ 
    \begin{pmatrix}
      x_{2,1} & \ldots & x_{2,n}\\
      \vdots & & \vdots  \\
      x_{1+\ell,1} & \ldots & x_{1+\ell,n} \\
    \end{pmatrix}.
  \]
  
  \noindent Notice that $b = a = 1$ does not occur when $m =1$ because we have insisted $b \leq m-1$.
 
We now assume $2 \leq m$ and $a<n$.  Informally, the induction will proceed by relating the ideal of interest, $J$, to two other ideals, $I$ and $N$, whose construction will involve decreasing $m$ and increasing $a$, respectively.  In order to construct $I$, we will consider minors of size one smaller than those used to construct $J$ strictly northwest of $y_{m,n-a}$ in whichever of $H^{m,n}_{k,a}$ and $V^{m,n}_{\ell,b}$ contain $y_{m,n-a}$ as its southeast entry (for which the options are only $H^{m,n}_{k,a}$ if $a>0$ or both $H^{m,n}_{k,a}$ and $V^{m,n}_{\ell,b}$ if $a = 0$).  This construction will involve the increase of $\ell$ or $k$ as a way of bookkeeping a decrease of $m$ or $n$.  In particular, no new variables ever appear in any stage of this induction.  In the construction of $N$, we are deleting only the final column from $H^{m,n}_{k,a}$ in the case $0<a$ or both the final column from $H^{m,n}_{k,a}$ and also the final row of $V^{m,n}_{\ell,b}$ in the case $a = 0$.  In this construction, the sizes of the minors we are considering do not change.  
  
 \begin{notation}
Let $I_{k,\ell,a,b}(m,n) = I_m(H^{m,n}_{k,a})+I_n(V^{m,n}_{\ell,b})$.
 \end{notation} 
 
First consider the case $a = 0$.  Set $J = I_{k,\ell, 0,0}(m,n)$, $N = I_{k,\ell,1,1}(m,n)$, and $I = I_{k+1,\ell+1,0,0}(m-1,n-1)$.  Informally, $N$ is the ideal generated by the $m$-minors in the horizontal concatenation not involving its last column and the $n$-minors in the vertical concatenation not involving its last row, and $I$ is the ideal generated by the $(m-1)$-minors using only entries from $H^{m,n}_{k,0}$ that do not use the last row or column together with the $(n-1)$-minors of $V^{m,n}_{k,0}$ that do not use the last row or column.  Notice that each submatrix determining such a minor can be extended to a submatrix whose determinant is a generator of $J$ by augmenting it with the column and row containing $y_{m,n}$.  It is here that we are using that the minors are size $m$ and $n$ since those are the conditions that guarantee that a minor involving the last column of $H^{m,n}_{k,0}$ or last row of $V^{m,n}_{k,0}$ has identically $y_{m,n}$ as its southeast entry.  With respect to $\sigma$, set $A = \init(N)$, $B = \init(I)$, and let $C \subseteq \init(J)$ be the ideal generated by the main diagonals of the natural generators of $J$.  We must show that $C = \init(J)$.
 
We will first show that $[I/N](-1) \cong J/N$ as graded $R/N$-modules.  We first define an $R/N$-module map $\overline{\phi}: I/N \rightarrow J/N$.  Informally, we will be taking each generator of $I$ that is not also a generator of $N$ and mapping it to the generator of $J$ that includes it as a subdeterminant and has $y_{m,n}$ as the final entry of its leading term.

Fix $f \in I$, for which we know a Gr\"obner basis by induction.  Let $f_r$ denote its unique remainder after reduction by the Gr\"obner basis for $N$ given by the inductive hypothesis.  Using the deterministic division algorithm with respect to the order on the generators of $I$ induced by $\sigma$ within the degree $m-1$ generators coming from $H^{m,n}_{k,0}$ and then within the degree $n-1$ generators coming from $V^{m,n}_{\ell,0}$, there is a unique expression $f_r = \sum \alpha_i \delta_i$ of $f_r$, where the $\delta_i$ are the natural generators of $I$ that are not among the natural generators of $N$, the $\alpha_i \in R$, and $\alpha_i \init(\delta_i) \notin (\init(\delta_j) \mid \delta_j<_\sigma \delta_i)$.  If $\delta_i$ has degree $m-1$, then let $C(\delta_i)$ denote the determinant of the $m \times m$ submatrix of $H^{m,n}_{k,0}$ having $\delta_i$ as the minor if its $(m-1) \times (m-1)$ northwest submatrix and $y_{m,n}$ as its southeast corner.  Define $C(\delta_i)$ analogously in $V^{m,n}_{k,0}$ when $\delta_i$ has degree $n-1$ and is not in the ideal of $m-1$ minors of $H^{m,n}_{k,0}$.  We are now ready to define $\phi: I \rightarrow J/N$ by taking $\phi(f)$ to be the image in $J/N$ of $\sum \alpha_i C(\delta_i)$ where $f_r = \sum \alpha_i \delta_i$, as above.  Because we have used the deterministic division algorithm, $\phi$ is a well-defined map of sets, and it is not difficult to verity that $\phi$ is additive.  The challenge in verifying $R$-linearity is that for $s \in R$, $\sum s \alpha_i \delta_i$ is often \emph{not} the reduction of $sf_r$ obtained by the deterministic division algorithm.  Having established additivity, though, one immediately reduces to the case of $f_r = \alpha_p \delta_p$ for some $p$ and $s \alpha_p$ monomial.  Suppose first that $\delta_p$ is an $m$-minor in $H^{m,n}_{k,0}$.  If there is some counterexample, we fix a degree in which a counterexample occurs and then, within that degree, take $sf_r$ to have minimal leading term among counterexamples and, within that set, take $\delta_p$ to have the greatest leading term.  If the reduction of $sf_r$ obtained by the deterministic division algorithm is $s f_r$ itself, then the result is immediate. Otherwise, there will be some $x_{i,j}$ or some $y_{i,j}$ dividing $s\alpha_p$ such that $x_{i,j} \delta_p$ or $y_{i,j} \delta_p$ is not its own reduction obtained by the deterministic division algorithm.  
We will refer to that $x_{i,j}$ or $y_{i,j}$ as $z_{i,j}$ and $(s\alpha_p)/z_{i,j}$ as $(s\alpha_p)'$.  Now $s \alpha_p \delta_p = (s\alpha_p)'\sum z_{i,\ell} \delta_\ell$, where $\ell$ ranges over the columns of $\delta_p$ and each $\delta_\ell$ is obtained from $\delta_p$ by removing its column in position $\ell$ and replacing it with the column containing $z_{i,j}$ (which is either column $j$ or column $n+j$).  Now $s \alpha_p C(\delta_p) -(s\alpha_p)' \sum z_{i,\ell} C(\delta_\ell) = \pm y_{i,n} \mathcal{N} \in N$ where $\mathcal{N}$ is the $m$-minor given by the rows and columns of $\delta_p$ together with row $m$ and the column of $s$, which is an element of $N$.  Because each $(s\alpha_p)' z_{i,\ell} C(\delta_\ell)$ has either smaller leading term than $s \alpha_p \delta_p$ or equal leading term with the leading term of $\delta_\ell$ greater than the leading term of $\delta_p$, the result now follows from our assumptions of minimality. 
Suppose now that $\delta_p$ in an $n$-minor of $V^{m,n}_{k,0}$.  If the deterministic division algorithm applied to $s\alpha_p\delta_p$ makes use of any $m$-minors, then we obtain the desired result because each term in the resulting expression will involve only multiples of $(m-1)$-minors, which we understand by the argument above, or multiples of $(n-1)$-minors $\alpha_i \delta_i$ with the property that each term of $\alpha_i \delta_i$ is smaller with respect to $\sigma$ than the leading term of $s \alpha_p \delta_p$, which we understand by minimality of our choice of $s \delta_p$.  It, therefore, suffices to consider $s \delta_p$ whose reduction by the deterministic division algorithm uses only the generators of $V^{m,n}_{k,0}$, and there we repeat the argument from the previous case.  Therefore, $\phi$ is $R$-linear.  

It is clear that $\phi(N) = 0$, and so $\phi$ induces a map $\bar{\phi}:I/N \rightarrow J/N$.  If there is some $f \in I$ with leading term of $\sum \alpha_i C(\delta_i) \in J$ divisible by a main diagonal of any element of $N$, then $f_r = \sum \alpha_i \delta_i$ also has a term divisible by that same main diagonal because the leading term of $\sum \alpha_i C(\delta_i)$ is the product of $y_{m,n}$ and some term of $f_r$ and because $y_{m,n}$ is a non-zero-divisor on $R/\init(N)$.  But that is impossible by our choice of $f_r$, and so $\bar{\phi}$ is injective.  To show that $\bar{\phi}$ is surjective, it is sufficient to show that the image of $\phi$ includes the class of every generator of $J$ involving $y_{m,n}$.  Every $m$-generator in $H^{m,n}_{k,0}$ involving $y_{m,n}$ is the image of its cofactor corresponding to $y_{m,n}$ in a cofactor expansion.  Notice that any $n$-minor in $H^{m,n}_{k,0}$ having some term divisible by an $m$-minor of $H^{m,n}_{k,0}$ has, in particular, its leading term divisible by such an $m$-minor.  Therefore, the image of $\phi$ also includes all $n$-minors whose leading term is not divisible by an $m$-minor of $H^{m,n}_{k,0}$.  Using the deterministic division algorithm and a minimality argument, we have that every other $n$-minor is in the ideal of those we have already determined are in the image of $\phi$, and so $\phi$ and $\bar{\phi}$ are surjective.  It is clear that $\phi$ increases degree by exactly one, which concludes the proof that $[I/N](-1) \cong J/N$.

Similarly, define $\psi: [B/A](-1) \rightarrow C/A$ as a map of $R/A$-modules by multiplication by the image of $y_{m,n}$.   To see injectivity, we use here that $A$ is a monomial ideal generated by elements none of which are divisible by $y_{m,n}$.  We take surjectivity and well-definedness to be clear.  It now follows from Lemma \ref{givesGrobner} that $C = \init(J)$, as desired. 

The case of $0<a$ is very similar and in some ways simpler.  Here we will take $J = I_{k,\ell,a,b}(m,n)$, $I =I_{k,\ell+1,a+1,b-1}(m-1,n)$, and $N = I_{k,\ell, a+1,b}(m,n)$.  Now because $I_n(V_{\ell+1,b-1}^{m-1,n}) = I_n(V_{\ell,b}^{m,n}) \subseteq N$, when constructing the map $\bar{\phi}:[I/N](-1) \rightarrow J/N$, we have already that $f_r$, the remainder of $f$ after reduction by $N$, is an element of $I_{m-1}(H_{k,a+1}^{m-1,n})$, and so we may express $f_r = \sum \alpha_i \delta_i$ with the $\alpha_i \in R$ and the $\delta_i$ all degree $m-1$ natural generators of $H_{k,a+1}^{m-1,n}$.  We define $C(\delta_i)$ and then $\phi(f)$ as in the case $a = 0$.  The same arguments show that $\bar{\phi}$ is an isomorphism.  Again with $B = \init(I)$, $A = \init(N)$, and $C$ the ideal generated by the main diagonals of the natural generators of $J$, the map $\psi:[B/A](-1) \rightarrow C/A$ is given by multiplication by the image of $y_{m,n-a}$.  As before, Lemma \ref{givesGrobner} gives that $C = \init(J)$, concluding the proof.
\end{proof}

\begin{corollary}
With notation as in Theorem \ref{maximalCase}, $R/J$ is reduced and Cohen--Macaulay. 
\end{corollary}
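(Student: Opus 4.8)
The plan is to derive both conclusions from Theorem \ref{maximalCase}. Fix a diagonal term order $\sigma$ (these exist). By Theorem \ref{maximalCase}, $\init_\sigma(J)$ is generated by the products of the entries along the main diagonals of the natural generators of $J$. Since the entries along the main diagonal of any square submatrix of $H$ or $V$ are pairwise distinct indeterminates, each of these generators is a squarefree monomial, so $\init_\sigma(J)$ is a squarefree monomial ideal, in particular radical. From this, $J$ itself is radical by the standard argument: for $f \in \sqrt{J}$, some power $f^N$ lies in $J$, so $\init_\sigma(f)^N = \init_\sigma(f^N) \in \init_\sigma(J)$, and since $\init_\sigma(J)$ is radical and $\init_\sigma(f)$ is (a scalar times) a monomial, $\init_\sigma(f) \in \init_\sigma(J)$. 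Hence $\init_\sigma(\sqrt{J}) = \init_\sigma(J)$, so $R/J$ and $R/\sqrt{J}$ have the same Hilbert function; combined with $J \subseteq \sqrt{J}$ this forces $J = \sqrt{J}$, i.e. $R/J$ is reduced.

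For the Cohen--Macaulay property, it suffices to show that $R/\init_\sigma(J)$ is Cohen--Macaulay, since $\dim R/J = \dim R/\init_\sigma(J)$ and $\operatorname{depth} R/J \geq \operatorname{depth} R/\init_\sigma(J)$. I would obtain this by re-reading the induction in the proof of Theorem \ref{maximalCase}: each inductive step there produces monomial ideals $A = \init_\sigma(N) \subseteq B = \init_\sigma(I)$ and a variable $y$ (namely $y_{m,n}$ or $y_{m,n-a}$, which divides no generator of $A$) with $\init_\sigma(J) = A + yB$. Thus $\init_\sigma(J)$ is a basic double link of $B$ on $A$ — the hypothesis that $(R/A)_P$ be Gorenstein for each minimal prime $P$ of $A$ is automatic, as $A$ is squarefree — or, in Stanley--Reisner language, the complex of $\init_\sigma(J)$ has $y$ as a vertex whose deletion and link are the complexes of $A$ and $B$. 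Since basic double links preserve the Cohen--Macaulay property (\cite{KMM+01}), it follows that $R/\init_\sigma(J)$ is Cohen--Macaulay once $R/\init_\sigma(I)$ is. Running the same induction on $m$ and on $n-a$ as in Theorem \ref{maximalCase} then finishes, the base cases ($m=1$; $a=n$, a mixed ladder determinantal ideal; and the purely determinantal case) having Cohen--Macaulay diagonal initial ideals by \cite{HE71}, \cite{HT92}, and \cite{GM00}. Hence $R/\init_\sigma(J)$, and so $R/J$, is Cohen--Macaulay.

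I expect the main obstacle to be verifying the height hypothesis needed for each basic double link, namely $\height(\init_\sigma(N)) = \height(\init_\sigma(I)) - 1$, equivalently $\dim R/N = \dim R/I + 1$. This does not follow formally from the graded isomorphisms $J/N \cong [I/N](-1)$ and $C/A \cong [B/A](-1)$ supplied by the proof of Theorem \ref{maximalCase}, which only give $\dim R/I \leq \dim R/N$. One can settle it by computing the relevant codimensions directly in this special case, by invoking the dimension formula for double determinantal varieties proved in Section \ref{generalsection}, or by carrying the values $\dim R/I$, $\dim R/N$, $\dim R/J$ along as part of the inductive hypothesis and checking the codimension drop from the base cases upward. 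Once this bookkeeping is in place, the remainder is the same Hilbert-function and divisibility bookkeeping already used in Lemma \ref{givesGrobner} and in the proof of Theorem \ref{maximalCase}.
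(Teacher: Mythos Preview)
Your proposal is correct and follows the same route as the paper: reducedness from squarefreeness of $\init_\sigma(J)$, and Cohen--Macaulayness by rerunning the induction of Theorem~\ref{maximalCase} on the decomposition $C = A + yB$, phrased either as a basic double link or as a vertex decomposition (the paper gives both versions). Regarding the height condition you flag as the obstacle, the paper neither carries dimensions through the induction nor forward-references Section~\ref{generalsection}; it argues directly at each step that $\dim R/C = \dim R/A - 1$ (from $A \subseteq C \subseteq A + (y)$ with $y$ a non-zero-divisor on the Cohen--Macaulay ring $R/A$) and that $\dim R/B = \dim R/C$ (since $yB \subseteq C \subseteq B$ with $C$ radical forces $B$ into a minimal prime of $C$).
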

\begin{proof}
By Theorem \ref{maximalCase}, the ideal $C$ generated by the main diagonals of the natural generators of $J$ under any diagonal term order $\sigma$ is the initial ideal of $J$.  It is clear that $C$ is square-free, and so $R/C$ is reduced, which implies that $R/J$ is reduced.  Similarly, it is sufficient to show that $R/C$ is Cohen-Macaulay.  We give two related proofs:  one combinatorial using the Stanley-Reisner complex and one purely algebraic.  

This argument is indifferent to case $a= 0$ and $a>0$ provided we interpret $I$ and $N$ to be defined together with respect to $J$ within one case or the other.  For this reason, we here suppress the distinction between the cases.  For notational convenience, we recall that $A = \init(N)$, $B = \init(I)$, and, by Theorem \ref{maximalCase}, $C = \init(J)$.

It is sufficient to show that the Stanley-Reisner complex of $R/C$ is vertex decomposable, which implies shellable.  Using throughout this claim the notation from Theorem \ref{maximalCase}, notice that $C = A+y_{m,n-a}B$ and that $(C:y_{m,n-a}R) = B$.  This is to say that lk$_{y_{m,n-a}}(\Delta_C) = \Delta_B$ and that $\Delta_C-\{y_{m,n-a}\} = \Delta_A$, where, $\Delta_A$ denotes the Stanley-Reisner complex of $R/A$, $\Delta_B$ the Stanley-Reisner complex of $R/B$, $\Delta_C$ the Stanley-Reisner complex of $R/C$, and lk denotes link.  Informally, these simplicial complexes are recording the information that exactly the main diagonals of generators of $I$ can be multiplied by $y_{m,n-a}$ to obtain a main diagonal or multiple of a main diagonal of a generator of $J$ and that the generators of $J$ that do not involve $y_{m,n-a}$ are the generators of $I$.  Because $y_{m,n-a}$ is a non-zero-divisor that generates the image of $C$ in $R/A$, we have that $\dim(R/C) = \dim(R/A)-1$.  Because $C \subseteq B$ with $y_{m,n-a}B \subseteq C$, $B$ is contained in the union of the associated primes of $C$, which are the same as the minimal primes of $C$ because $C$ is radical, and so $\dim(B) = \dim(C)$.  Because a Stanley-Reisner complex has dimension exactly one lower than the Krull dimension of its Stanley-Reisner ring, we have $\dim(\Delta_B) = \dim(\Delta_C) = \dim(\Delta_A)-1$.  By induction, $\Delta_B$ and $\Delta_A$ are vertex decomposable, and so by definition of vertex decomposition $\Delta_C$ is as well.  

We will also proceed by induction to give a purely algebraic proof.  Using that $(C:y_{m,n-a}R) = B$, we have the four-term exact sequence

\[
  0 \rightarrow B(R/C) \rightarrow R/C \xrightarrow{y_{m,n-a}} R/C \rightarrow R/(C+y_{m,n-a}R) \rightarrow 0,
\]
which gives rise to the short exact sequence
\[
  0 \rightarrow R/B \xrightarrow{y_{m,n-a}} R/C \rightarrow R/(A+y_{m,n-a}R) \rightarrow 0
\]
using the observation that $R/(A+y_{m,n-a}R) \cong R/(C+y_{m,n-a}R)$.  Because $y_{m,n-a}$ is a non-zero-divisor on $A$, we have $\dim(R/(A+y_{m,n-a}R)) = \dim(R/A)-1$, and so, in particular, recalling our dimension arguments from the preceding paragraph, $\dim(R/C) = \dim(R/B) = \dim(R/(A+y_{m,n-a}R))$.  By induction $R/B$ and $R/A$ are Cohen-Macaulay, and the quotient of a Cohen-Macaulay ring by a non-zero-divisor is always again Cohen-Macaulay, and so $R/(A+y_{m,n-a}R)$ is Cohen-Macaulay.  Now because $R/(A+y_{m,n-a}R)$ and $R/B$ have only one non-vanishing $\Ext^\bullet(R/m,\functorslot)$ or Koszul cohomology or local cohomology on the homogeneous maximal ideal module, the long exact sequence for any of those shows that $R/C$ has only one non-vanishing module of the same, which is to say that it is Cohen-Macaulay.  

\end{proof} 

\section{The general case}
\label{generalsection}
\begin{theorem}\label{generalCase}
The natural generators of a double determinantal ideal form a Gr\"obner basis under any diagonal term order.  Double determinantal varieties are reduced and arithmetically Cohen-Macaulay.  In addition, they are glicci.
\end{theorem}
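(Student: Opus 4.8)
The plan is to run the inductive argument of Theorem~\ref{maximalCase} inside a family of ideals large enough to contain every double determinantal ideal and to be closed under the operation of peeling off a corner. I would define a class of \emph{mixed double ladder determinantal ideals}: here $H$ and $V$ are replaced by staircased horizontal and vertical concatenations of subblocks of the matrices $X_q$, and one prescribes a minor size for each ladder step, these sizes weakly decreasing toward the small end of each ladder. The class must be cut out by inequalities on its shape parameters --- generalizing the conditions $0 \leq k \leq \ell \leq k+m-2$, $-m+2 \leq b \leq 1$, and ``$a+b$ constant'' of Theorem~\ref{maximalCase} --- chosen so that the class is closed under the peeling step, so that every generating minor still has its main diagonal as leading term under the diagonal order $\sigma$, and so that the corner variable peeled at each step, though it may occur in the matrices defining the auxiliary ideal $N$ below, never occurs on a main diagonal of a generating minor of $N$. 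The base cases are $r=1$ and the cases in which the conditions coming from $H$ (or those coming from $V$) are vacuous or implied by the other side; in each of these the ideal is a one-sided mixed ladder determinantal ideal in a matrix of distinct indeterminates, so \cite[Theorem~1.10]{Gor07} (together with \cite[Theorem~4.5.4]{GM00} and \cite[Theorem~5.5]{GMN13}) gives that it is glicci and arithmetically Cohen--Macaulay and that its natural generators form a Gr\"obner basis under any diagonal term order.

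For the inductive step --- the induction running on the shape parameters exactly as on $n-a$ and $m$ in Theorem~\ref{maximalCase} --- let $J$ be such an ideal not in a base case and fix a diagonal term order $\sigma$. As in Section~\ref{specialsection} pick the southeast corner variable $f$ of the active ladder step: in $H$ if the minor size there equals the number of rows of that step, and in $V$ otherwise. Let $N$ be $J$ with that corner deleted (shrink the step by one column, or by one row, leaving all minor sizes unchanged), and let $I$ be obtained by decrementing the minor size of the affected step (or steps) and restricting to the portion strictly northwest of $f$, recording the shrinkage in the shape parameters so that no new variable is introduced. One checks that $N \subseteq I$, that $N \subseteq J$, and that $J = N + fI$. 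The crux is the graded isomorphism $J/N \cong [I/N](-1)$ of $R/N$-modules, sending the class of a natural generator $\delta$ of $I$ to the class of the natural generator of $J$ obtained by bordering $\delta$ with the row and the column through $f$. Well-definedness and $R$-linearity are proved exactly as for the map $\phi$ in the proof of Theorem~\ref{maximalCase}, by the deterministic division algorithm together with a minimality argument; since the active minor size is maximal in one direction, every generator of $J$ meeting the peeled row or column has $f$ last in its leading term, so that ``bordering'' inverts ``cofactor expansion along the row or column of $f$.''

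Granting the isomorphism, all four conclusions follow by the induction. Because no minimal generator of $\init(N)$ is divisible by $f$ (this is one of the shape constraints), $f$ is a nonzerodivisor on $R/\init(N)$ and hence on $R/N$; by the inductive hypothesis $R/N$ is reduced and Cohen--Macaulay, so $(R/N)_P$ is a field, a fortiori Gorenstein, at every minimal prime $P$ of $N$, and $\height(N)=\height(I)-1$ (from the same induction and the dimension formula proved alongside it). From $(J:f)=I$ one gets the exact sequence $0 \to (R/I)(-1) \xrightarrow{f} R/J \to R/(N+(f)) \to 0$; here $R/I$ is Cohen--Macaulay by induction and $R/(N+(f))$ is Cohen--Macaulay because $f$ is a nonzerodivisor on the Cohen--Macaulay ring $R/N$, and both have dimension $\dim R - \height(I)$, whence $R/J$ too has this dimension and is Cohen--Macaulay, so arithmetically Cohen--Macaulay; in particular $\height(J)=\height(I)$, so the induction stays within its range. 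Moreover $J=N+fI$ now exhibits $J$ as a basic double $G$-link of $I$ on $N$, hence as an ascending elementary $G$-biliaison of $I$, so $J$ lies in the Gorenstein liaison class of a complete intersection because $I$ does by induction, i.e. $J$ is glicci. For the Gr\"obner statement, set $A=\init(N)$ and $B=\init(I)$ (monomial; by induction these are the ideals generated by the main diagonals of the natural generators of $N$ and of $I$), and let $C \subseteq \init(J)$ be the ideal generated by the main diagonals of the natural generators of $J$. Using that $\sigma$ is diagonal one computes $C=A+fB$, so $[C/A]_d \cong [B/A]_{d-1}$ via multiplication by $f$ (injective because no generator of $A$ is divisible by $f$); combined with $J/N \cong [I/N](-1)$, Lemma~\ref{givesGrobner} yields $C=\init(J)$, i.e. the natural generators of $J$ form a Gr\"obner basis. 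Finally $C$ is squarefree, so $R/C$ and therefore $R/J$ are reduced.

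The main obstacle is the first step: pinning down a family of mixed double ladder determinantal ideals that is simultaneously broad enough to contain all double determinantal ideals and to be closed under peeling --- absorbing the minor-size decrements and the ragged shapes they create --- and narrow enough to satisfy (i) ``diagonal $=$ leading term'' for every generating minor, (ii) the peeled corner never lies on a generating diagonal of $N$, and (iii) its degenerate members are precisely the one-sided mixed ladder determinantal ideals handled by \cite{Gor07}. This is the analogue of, but considerably more delicate than, the conditions on $(k,\ell,a,b)$ in Theorem~\ref{maximalCase}. After that is done, the remaining work --- chiefly verifying $J/N \cong [I/N](-1)$ in full generality, by tracking which minors of $I$ border to which minors of $J$ and checking the linearity relations --- is lengthy but routine.
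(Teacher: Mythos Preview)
Your overall strategy is exactly the paper's: enlarge the class of ideals to one closed under peeling a corner, induct using Lemma~\ref{givesGrobner} for the Gr\"obner basis and basic double $G$-linkage/elementary $G$-biliaison for the Cohen--Macaulay and glicci conclusions, with one-sided mixed ladder determinantal ideals as base cases. The liaison bookkeeping you give (heights, $G_0$, the short exact sequence, squarefreeness) is the same as the paper's.

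Where you and the paper differ is precisely at what you call ``the main obstacle,'' and the paper's resolution of it is a little different from what you sketch. The paper does \emph{not} choose to peel from $H$ or from $V$ depending on whether the minor size equals the number of rows of a step; that dichotomy is an artifact of the maximal-minor case and does not survive to arbitrary $s,t$. Instead the paper's family is parametrized by a nested chain of Young diagrams $\lambda^0\supseteq\cdots\supseteq\lambda^{k-1}$ (with $k\leq s$) overlaid on $H$, subject to $(r-1)n\leq\lambda^i_j\leq rn$ (so every diagram fully covers $X_1,\ldots,X_{r-1}$ and only the $Y$-block is ragged) and to the condition that each southeast corner of $\lambda^\alpha$ sits on the outer boundary of $\lambda^{\alpha-1}$. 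The companion diagrams $\widehat{\lambda}^i$ on $V$ are \emph{determined} by the $\lambda^i$ so that the two cover exactly the same $y$-variables. The ideal takes $(s-i)$-minors inside $\lambda^i$ in $H$ and $(t-i)$-minors inside $\widehat{\lambda}^i$ in $V$. The peel always removes a $y_{i,j}$ --- the southeast corner of $\lambda^0\setminus\lambda^\alpha$ for the least $\alpha$ making this nonempty (or of $\lambda^0$ itself) --- simultaneously from both $H$ and $V$. To form $I$ one does not merely decrement a minor size on an existing step; one \emph{adds} (or enlarges) a further diagram $\widetilde{\lambda}^{\beta+1}$ consisting of everything strictly northwest of the deleted $y_{i,j}$, carrying minors of size one smaller. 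This is why the family must allow a whole nested chain of up to $s$ diagrams from the outset. The outer induction is on $r$: once no $y_{i,j}$ remains outside the size-$1$ region, the ideal is a cone over a case with $r-1$ matrices.

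So your plan is correct, but the specific peeling rule you propose (borrowed from Theorem~\ref{maximalCase}) would not close up: with non-maximal minors the corner you want to remove sits in both $H$ and $V$ in an essential way, and the fix is the coupled $(\lambda^i,\widehat{\lambda}^i)$ formalism and the possibility of spawning a new inner diagram at each peel.
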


Before the proof begins, we outline the argument. Let $V_r$ be the double determinantal variety associated to the $m \times n$ matrices $X_1, \ldots, X_{r-1}$, and $Y$ of distinct indeterminates, where $X_q = (x^q_{i,j})$ and $Y = (y_{i,j})$ with minor sizes $s \leq m$ in the horizontal concatenation and $t \leq n$ in the vertical concatenation.  Let $R = K[X,Y] = K[x^q_{i,j}, y_{i,j} \mid i \in [m], j \in [n], q \in [r-1]]$ be the standard graded polynomial ring associated to the $X_q$ and $Y$ over the field $K$.  Fix a diagonal term order $\sigma$.  We must show that the ideal generated by the $s$-minors of the horizontal concatenation of the $X_q$ and $Y$ together with the $t$-minors of the vertical concatenation is Cohen-Macaulay and that the natural generators form a Gr\"obner basis.  In order to facilitate an induction, we will prove more generally that for certain ideals generated by some $s$-minors and some minors of various smaller sizes in restricted regions of the the horizontal concatenation together with some $t$-minors and some smaller minors in the vertical concatenation, the natural generating set forms a Gr\"obner basis.  In particular, we will be considering sums of two ideals of possibly mixed minors of particular one-sided ladders.  We will use Young diagrams (oriented in the English style) to track our argument.  Our induction will be foremost on $r$ and, within any fixed $r$, on the integer of which the largest Young diagram appearing is a partition.

\begin{proof}
  
Our proof will make use of the following assumptions and notation. Without loss of generality, assume $s \leq t$.  Fix $k \leq s$ Young diagrams $\lambda^0 = (\lambda^0_1, \ldots, \lambda^0_{\ell_0}), \cdots, \lambda^{k-1} = (\lambda^{k-1}_1, \ldots, \lambda^{k-1}_{\ell_{k-1}})$ with $\lambda^i \subseteq \lambda^{i-1}$ for all $0 \leq i < k$, $(r-1)n \leq \lambda^i_j \leq rn$ for each $0 \leq i < k$ and each $1 \leq j \leq \ell_i$.  Insist further that whenever $(i,(r-1)n+j)$ is a southeast corner of $\lambda^\alpha$ for any $0 <\alpha <k$, then $(i+1,(r-1)n+j+1) \notin \lambda^{\alpha-1}$.  Informally, these quantifiers are saying that each Young diagram fits inside of the horizontal concatenation of our $r$ matrices of indeterminates and covers the variables $x^q_{i,j}$ for $1 \leq q <r$ whenever any part of the Young diagram reaches the $i^{th}$ row and also that an outside corner of a smaller Young diagram must lie along the outer boundary of any Young diagram that contains it.  Eventually, our induction will shrink the Young diagrams so that the $y_{i,j}$ are in use only in the case of size-one minors, which will allow us to reduce $r$ by $1$.  Let $H_{\Lambda}$ denote the horizontal concatenation of the $X_q$ and $Y$ with $\lambda^0, \ldots, \lambda^{k-1}$ laid over it and justified northwest.  From each $\lambda^i$, create the Young diagram $\widehat{\lambda}^i = (\widehat{\lambda}^i_1, \ldots, \widehat{\lambda}^i_{m+k})$ with $\widehat{\lambda}^i_j = \lambda^i_1-(r-1)n$ for $1 \leq j \leq m$ and $\widehat{\lambda}^i_j = \lambda^i_{j-m}-(r-1)n$ for $m+1 \leq j \leq m+\ell_i$.  

Let $V_{\Lambda}$ denote the vertical concatenation of the $X_q$ and $Y$ with $\widehat{\lambda}_i$ for $0 \leq i < k$ laid over it and justified northwest.  Notice that each $\lambda^i$ and $\widehat{\lambda}^i$ cover the same variables $y_{i,j}$ but typically different $x^q_{i,j}$.  Notice also that $\widehat{\lambda}^i$ is entirely determined by $\lambda^i$, but that two distinct $\lambda^i$ can define the same $\widehat{\lambda}^i$, as we will see in Example \ref{inductiveExample}.  

Define the ideal $I_{\Lambda}(s,t)$ to be the ideal generated by the $(s-i)$-minors of submatrices of $H_{\Lambda}$ all of whose entries lie in $\lambda^i$ together with the $(t-i)$-minors of submatrices of $V_{\Lambda}$ all of whose entries lie in $\widehat{\lambda}^i$ for $0 \leq i<k$.  

We will sometimes say $y_{i,j} \in \lambda^\alpha$ to mean $(i,(r-1)n+j) \in \lambda^\alpha$ for $0 \leq \alpha < k$.  

\begin{claim}\label{internalClaim}
With notation as above, for any diagonal term order $\sigma$, the natural generators of $I_{\Lambda}(s,t)$ form a Gr\"obner basis for the ideal they generate.
\end{claim}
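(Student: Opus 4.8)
I will follow the template of the proof of Theorem \ref{maximalCase}, now carried out by a double induction: the outer induction is on $r$, and for fixed $r$ the inner induction is on $|\lambda^0|$ (equivalently, on the total number of boxes, since $\lambda^{k-1}\subseteq\cdots\subseteq\lambda^0$). At each stage I will produce, from $J := I_\Lambda(s,t)$, two ideals $N$ and $I$ of the same combinatorial type (each of the form $I_{\Lambda'}(s',t')$ for an admissible nested family of diagrams), a distinguished indeterminate $z\in R$, and isomorphisms of graded modules
\[ [I/N](-1)\;\cong\;J/N\ \text{ over }R/N, \qquad [B/A](-1)\;\cong\;C/A\ \text{ over }R/A, \]
where $A=\init(N)$, $B=\init(I)$, and $C$ is the ideal generated by the main diagonals of the natural generators of $J$. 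Lemma \ref{givesGrobner} (with $\ell=1$) then gives $C=\init(J)$, which is exactly the statement that the natural generators of $J$ form a Gr\"obner basis under $\sigma$.

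\textbf{Base cases.} The double induction is arranged so that shrinking the diagrams eventually produces configurations for which the claim is already known: if $r=1$, or once $\lambda^0$ (hence every $\lambda^\alpha$, hence every $\widehat\lambda^\alpha$) has lost its $Y$-block part, then $I_\Lambda(s,t)$ is a one-sided mixed ladder determinantal ideal in a matrix of distinct indeterminates, handled by \cite[Theorem 4.5.4]{GM00}, \cite[Theorem 1.10]{Gor07}, or \cite[Theorem 5.5]{GMN13}; the intermediate configurations in which the $Y$-block participates only through $1$-minors are handled by splitting off those variables and applying the outer induction on $r$. Degenerate diagrams (a single column, or too small to support the prescribed minors) are immediate.

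\textbf{Inductive step.} Let $z$ be the indeterminate occupying the southeast-most box of $\lambda^0$ lying in the $Y$-block; we may assume such a box exists, else we are in a base case. The constraint that an outside corner of any smaller $\lambda^\alpha$ lies on the boundary of any larger one, together with the width bounds $(r-1)n\le\lambda^i_j\le rn$, guarantees that deleting the column of $z$ is compatible with the whole nested family and that $z$ is the southeast entry of every maximal square submatrix (of $H_\Lambda$, and of $V_\Lambda$ when $z$ is used vertically) whose determinant is a natural generator of $J$. Define $N$ by deleting the last ($y$-)column of $\lambda^0$ and, in the case that this forces a row-deletion on the vertical side, the matching last row of the relevant $\widehat\lambda^\alpha$ (paralleling the $a=0$ versus $a>0$ dichotomy of Theorem \ref{maximalCase}); then $|\lambda^0|$ strictly drops, so $N$ is covered by the inner induction. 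Define $I$ by replacing, in the region strictly northwest of $z$, each $(s-\alpha)$-minor condition on $H_\Lambda$ by an $(s-\alpha-1)$-minor condition (and likewise each $(t-\alpha)$-minor condition on $V_\Lambda$ if $z$ is used vertically), recording the drop in minor size by contracting the diagrams; no new variables appear, and $I$ is covered by the induction (by $|\lambda^0|$, or by $r$ once the $Y$-block is exhausted). As in Theorem \ref{maximalCase}, build $\phi\colon I\to J/N$ by reducing $f\in I$ modulo the inductively known Gr\"obner basis of $N$ to $f_r$, expanding $f_r=\sum\alpha_i\delta_i$ with the deterministic division algorithm against the natural generators of $I$, replacing each $\delta_i$ by the natural generator $C(\delta_i)$ of $J$ having $\delta_i$ as a corner subminor and $z$ as its southeast entry, and setting $\phi(f)$ equal to the class of $\sum\alpha_i C(\delta_i)$. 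Additivity is immediate; $R$-linearity is reduced, exactly as in the special case, to a minimality argument in which the discrepancy between applying $\phi$ termwise and applying $\phi$ after re-running the division algorithm is a sum of cofactor (Laplace/Pl\"ucker) relations whose correction terms lie in $N$. Then $\phi$ annihilates $N$ and descends to $\bar\phi\colon[I/N](-1)\to J/N$, which is injective because $z$ is a nonzerodivisor on $R/\init(N)$ and surjective because every natural generator of $J$ involving $z$ is a cofactor expansion of an image and every other generator lies in the ideal of those by a second minimality argument. The parallel map $\psi\colon[B/A](-1)\to C/A$ is multiplication by $z$; it is injective since $A$ is monomial with no generator divisible by $z$, and visibly surjective. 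Lemma \ref{givesGrobner} completes the step.

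\textbf{Main obstacle.} The crux is the $R$-linearity of $\phi$ in this generality. Unlike in Theorem \ref{maximalCase}, the argument must be run simultaneously for the horizontal concatenation $H_\Lambda$ and the vertical concatenation $V_\Lambda$ and across $k$ nested diagrams carrying minors of several different sizes, so one must check that the relevant cofactor identities close up and that every correction term genuinely lands in $N$ --- which is precisely what the bookkeeping hypotheses on the $\lambda^\alpha$ (the containments $\lambda^i\subseteq\lambda^{i-1}$, the width bounds, and the southeast-corner condition) are engineered to secure. A secondary point of care is choosing $z$ and the pair $(N,I)$ so that both descend in the well-founded order and so that the extra vertical row-deletion is performed exactly when $z$ is used by $V_\Lambda$ as well as $H_\Lambda$.
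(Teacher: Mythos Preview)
Your overall plan---double induction on $r$ and on $|\lambda^0|$, Lemma~\ref{givesGrobner} with $\ell=1$, the cofactor map $\bar\phi:[I/N](-1)\to J/N$, and the multiplication-by-$z$ map $\psi:[B/A](-1)\to C/A$---is exactly the paper's strategy, and your identification of the base cases is right. But your constructions of $N$, $I$, and $z$ are a mis-generalization of Theorem~\ref{maximalCase}, and this is a real gap, not just imprecision.

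The crucial point is that in Theorem~\ref{maximalCase} the minors are \emph{maximal}: every $m$-minor of the $m$-row matrix $H^{m,n}_{k,a}$ uses all $m$ rows, so any $m$-minor involving the last column necessarily has $y_{m,n-a}$ as its southeast entry. That is why deleting an entire column (and, when $a=0$, a row of $V$) produces an $N$ for which $J/N$ is generated by minors all sharing the same southeast entry, so that a single degree-$1$ map captures everything. In the general setting the minors are $(s-\alpha)$-minors with $s\le m$ in a ladder of height up to $m$: a minor using the rightmost column of $\lambda^0$ need not use the row of $z$, and hence need not have $z$ as its southeast entry. If you delete the whole column, $J/N$ will be generated by minors with several different southeast entries, and no single multiplication-by-$z$ map can be surjective onto $C/A$. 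Your claim that ``$z$ is the southeast entry of every maximal square submatrix \dots\ whose determinant is a natural generator of $J$'' is therefore false as stated.

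The paper fixes this by deleting \emph{one box}, not a column: $N=I_{\overline\Lambda}(s,t)$ where $\overline\Lambda$ replaces each $\lambda^\beta$ containing $z$ by $\lambda^\beta\setminus\{z\}$. Because $z$ is a \emph{corner} of each such $\lambda^\beta$, the only natural generators of $J$ not already in $N$ are exactly those with $z$ as southeast entry, and then the cofactor map works as in the special case. Correspondingly, $I=I_{\widetilde\Lambda}(s,t)$ is built by \emph{adding a new layer} to the nested family (possibly increasing $k$ by one): one sets $\widetilde\lambda^{\beta+1}=\lambda^{\beta+1}\cup\{(p,q):p<i,\ q<(r-1)n+j\}$, so that its $(s-\beta-1)$-minors are precisely the northwest cofactors of the generators of $J$ with southeast entry $z$. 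Your description of $I$ as ``replacing each $(s-\alpha)$-minor condition by an $(s-\alpha-1)$-minor condition'' does not do this. Finally, the paper's choice of $z$ is not simply the southeast-most box of $\lambda^0$: it is the southernmost box of the easternmost column of $\lambda^0\setminus\lambda^\alpha$ for the minimal $\alpha$ with $\lambda^0\neq\lambda^\alpha$; this guarantees $\lambda^0=\cdots=\lambda^\beta$ and keeps $\widetilde\Lambda$ inside the admissible class (nesting, width bounds, and the southeast-corner compatibility condition), which must be checked for the induction to proceed.
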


Theorem \ref{generalCase} is the case $k = 1$ and $\lambda^0_j = rn$ for all $1 \leq j \leq m$.  Recall that $\init(I_{\Lambda}(s,t))$ being glicci implies that $\init(I_{\Lambda}(s,t))$ is Cohen-Macaulay, which in turn implies that $I_{\Lambda}(s,t)$ is Cohen-Macaulay, and that $\init(I_{\Lambda}(s,t))$ square-free implies that $\init(I_{\Lambda}(s,t))$ is radical, which in turn implies that $I_{\Lambda}(s,t)$ is radical.  We will proceed by induction first on $r \geq 1$ and then, within each choice of $r$, on the number $\lambda^0$ partitions, i.e., the number of variables covered by $\lambda^0$.  

Our outermost induction is on $r$, the number of $m \times n$ matrices of indeterminates. For this induction, the base case is when $r = 1$ and $\lambda^i_j = n$ for for all all $1 \leq j \leq \ell_k$ for all $1 \leq j \leq k$, in which case  $I_{\Lambda}(s,t)$ is a mixed ladder determinantal ideal, and so the conclusion is known \cite[Corollary 2.2]{Gor07}\cite[Theorem 4.4.1]{GM00}.  Informally, this situation describes the case of one full matrix $X_1$ and minor sizes varying only in rectangular regions described as those north of some row with minor sizes decreasing as we consider progressively more northern submatrices.  

Having fixed an $r \geq 2$, if $\lambda^0_1 = (r-1)n$ then no Young diagram involves any variable $y_{i,j}$, and so we may view $I_{\Lambda}(s,t)$ as defined using only the $r -1$ matrices $X_q$ for $1 \leq q \leq r-1$. Also, if $k = s-1$, and if there exists $h \geq 1$ such that $\lambda^0_j = \lambda^{k}_j$ for all $j \leq h$ and either $\lambda^i_h = (r-1)n$ or $h = \ell_0$, then we may also view $I_{\Lambda}(s,t)$ as defined using only the $r -1$ matrices $X_q$ for $1 \leq q \leq r-1$.    These conditions describe the case of taking $1$-minors in any Young diagram involving any $y_{i,j}$, in which case $I_{\Lambda}(s,t)$ defines a cone over a double determinantal variety defined only using the matrices $X_q$. Together, these conditions describe the base cases for the induction on the size of the partition \(\lambda_{0}\).

With these base cases in mind, we proceed to the inductive argument. We assume that $s > 1$ and that there exists at least one $y_{i,j} \in \lambda^0$.  If there exists some $\alpha>0$ such that there exists some $y_{i,j} \in \lambda^0 - \lambda^\alpha$, choose $\alpha$ minimal with respect to that condition and let $y_{i,j}$ be the southernmost entry from the easternmost column of $\lambda^0 - \lambda^\alpha$.  If there does not exist such an $\alpha$, then choose $y_{i,j}$ to be simply the southernmost entry from the easternmost column of $\lambda^0$.  

We will proceed by removing $y_{i,j}$ in order to make $\lambda^0$ smaller.  Set $\widetilde{\lambda}^\beta = (\lambda^\beta_1, \ldots, \lambda^\beta_i - 1, \ldots, \lambda^\beta_{\ell_\beta})$, i.e., $\lambda^\beta$ with $y_{i,j}$ removed, for all $\lambda^\beta$ with $y_{i,j} \in \lambda^\beta$.  Define $I_{\overline{\Lambda}}(s,t)$ analogously to $I_\Lambda(s,t)$ with $\overline{\Lambda}$ referring to the set $\{\lambda^\beta \mid y_{i,j} \notin \lambda^\beta \} \cup \{\widetilde{\lambda^\beta} \mid y_{i,j} \in \lambda^\beta\}$.  If $I_{\overline{\Lambda}}(s,t) = I_{\Lambda}(s,t)$, then we are done by induction.  (This situation occurs when all of the $(s-\beta)$-minors in $H_{\Lambda}$ and all of the $(t-\beta)$-minors in $V_{\Lambda}$ that have $y_{i,j}$ as the final entry along their main diagonals are already in the ideal generated by smaller minors of submatrices northwest of $y_{i,j}$.)  Otherwise, if $\beta$ is the largest index for which $y_{i,j} \in \lambda^\beta$, we set $\widetilde{\lambda}^{\beta+1} = \lambda^{\beta+1} \cup \{r,s \mid r<i, s<(r-1)n+j\}$ if $\beta<k$, and set $\widetilde{\lambda}^{\beta+1} = \{r,s \mid r<i, s<(r-1)n+j\}$ if $\beta = k$.  Define $\widehat{\widetilde{\lambda^i}}$ analogously to $\widehat{\lambda}^i$ for each $1 \leq i \leq \beta+1$.  Less formally, we are adding to $\lambda^{\beta+1}$ and $\widehat{\lambda}^{\beta+1}$ all variables strictly northwest of $y_{i,j}$ so that every $(s-\beta)$-minor of a matrix with $y_{i,j}$ as its southeast entry in $\lambda^{\beta}$ is in the ideal generated by the $(s-(\beta+1))$-minors of the submatrices all of whose entries are in $\widetilde{\lambda}^{\beta+1}$ and the same for $(t-\beta)$-minors and $\widehat{\widetilde{\lambda^\beta}}$.  Let $\widetilde{\Lambda}$ denote the set $\{\widetilde{\lambda}^i \mid 1 \leq i \leq \beta+1 \} \cup \{\lambda^i \mid \beta+1<i \leq k\}$.  

We now prepare to use Lemma \ref{givesGrobner}.  Set $N = I_{\overline{\Lambda}}(s,t)$, $J = I_{\Lambda}(s,t)$, and $I = I_{\widetilde{\Lambda}}(s,t)$.  Notice that $N \subseteq I \cap J$ and that, by induction, the natural generators of $I$ and $N$ form a Gr\"obner basis for $I$ and $N$, respectively.  Set $A = \init(N)$ and $B = \init(I)$, and let $C$ be the ideal generated by the main diagonals of the natural generators of $J$.  We will exhibit an isomorphism $\phi: [I/N](-1) \rightarrow J/N$.  

In a manner similar to the argument in Section 3, define $f_r$ to be the remainder of $f$ after reduction by $N$, for which we know a Gr\"obner basis by induction.  Let $\delta_u$ be the natural generators of $I$.  Using the deterministic division algorithm, divide first by the $\delta_u$ coming from $H_{\Lambda}$ in the order induced by $\sigma$ and by then those coming from $V_{\Lambda}$ ordered by $\sigma$ to obtain $f_r = a_u \delta_u$.  For any generator $\delta_u$ in the expression coming from $H_{\Lambda}$, let $C(\delta_u)$ denote the determinant of the submatrix of $H_{\Lambda}$ having $\delta_u$ as the determinant of its northwest corner and $y_{i,j}$ its southeast entry.  Define $C(\delta_u)$ analogously for any $\delta_u$ in the expression coming from $V_{\Lambda}$.  Define $\phi: [I/N](-1) \rightarrow J/N$ by $\phi(f) = \sum a_u C(\delta_u)$.  We leave the details of the verification that $\phi$ is an isomorphism to the reader and point the reader to the similar argument in Section 3 as a guide.  Furthermore, the map $\psi:[B/A](-1) \xrightarrow{y_{i,j}} C/A$ is an isomorphism.  It follows from Lemma \ref{givesGrobner} that $C = \init(J)$, which completes the proof of Claim \ref{internalClaim}.

Because $A$, $B$, and $C$ are clearly square-free, they are radical, and so $N$, $I$, and $J$ are also radical. It follows that $A$ and $N$ are also $G_0$.  All of these ideals are saturated (having excluded the case $s=1$ and $\lambda^0 = H$) and homogeneous.  Because $A \subseteq C \subseteq B$, we have $\height(A) \leq \height(C) \leq \height(B)$.  But the image of $C$ is generated by $y_{i,j}$ in $R/A$ with $y_{i,j}$ a non-zero-divisor in $R/A$, so $\height(A) = \height(C)+1$.  Also we have $y_{i,j}B \subseteq C$, and so $\height(C) = \height(B)$.  Now $C = A+y_{i,j}B$, and so $C$ is a basic double $G$-link of $B$ on $A$, and so $A$ and $B$ Cohen--Macaulay (and so also height unmixed) by induction implies $C$ Cohen-Macaulay, which in turn implies $J$ Cohen-Macaulay.  Moreover, the map $\phi$ shows that $J$ is obtained from $I$ via elementary $G$-bilianson of height $1$.  Because $I$ is glicci by induction, $J$ is also glicci.   
\end{proof}

\begin{remark} Note that, as in Section 3 and following the notation introduced there, we have the relations lk$_{y_{i,j}}(\Delta_C) = \Delta_B$ and $\Delta_C-\{y_{i,j}\} = \Delta_A$ on Stanley-Reisner complexes, which shows by induction that $\Delta_C$ is vertex decomposable and so gives a combinatorial argument that $R/C$ is Cohen-Macaulay.  
\end{remark}

\begin{example}\label{inductiveExample}
  If $m=5$, $n=7$, $s=3$, $t=4$, $\lambda = \lambda^0= (12,12,11,11)$, $\mu = \lambda^1= (12,12)$, and $y_{i,j} = y_{4,4}$, the matrices together with Young diagrams and ideals that make up the inductive step of this argument are described explicitly below.  With only two $\lambda^i$ in the example, we will always write out $\Lambda$, $\overline{\Lambda}$, and $\widetilde{\Lambda}$ explicitly in terms of $\lambda$ and $\mu$.  In the following matrices, we use a solid border to indicate \(\lambda\) and a dotted line for \(\mu\). We begin with the \(5 \times 14\) matrix shown below, with \(\lambda\) and \(\mu\) overlaid.

  \begin{equation*}
    H_{\lambda,\mu} = \begin{tikzpicture}[baseline=(current  bounding  box.center)]
      \matrix [matrix of math nodes,left delimiter=(,right delimiter=)] (m)
      {
        x_{11} & x_{12} & x_{13} & x_{14} & x_{15} & x_{16} & x_{17} & y_{11} & y_{12} & y_{13} & y_{14} & y_{15} & y_{16} & y_{17}\\
        x_{21} & x_{22} & x_{23} & x_{24} & x_{25} & x_{26} & x_{27} & y_{21} & y_{22} & y_{23} & y_{24} & y_{25} & y_{26} & y_{27}\\
        x_{31} & x_{32} & x_{33} & x_{34} & x_{35} & x_{36} & x_{37} & y_{31} & y_{32} & y_{33} & y_{34} & y_{35} & y_{36} & y_{37}\\
        x_{41} & x_{42} & x_{43} & x_{44} & x_{45} & x_{46} & x_{47} & y_{41} & y_{42} & y_{43} & y_{44} & y_{45} & y_{46} & y_{47}\\
        x_{51} & x_{52} & x_{53} & x_{54} & x_{55} & x_{56} & x_{57} & y_{51} & y_{52} & y_{53} & y_{54} & y_{55} & y_{56} & y_{57}\\
      };
      \draw[black]
      ([xshift = -2pt, yshift = 2pt] m-1-1.north west) --
      ([xshift = -2pt, yshift = -2pt] m-4-1.south west) --
      ([xshift = 2pt, yshift = -2pt] m-4-11.south east) --
      ([xshift = 2pt, yshift = -2pt] m-2-11.south east) --
      ([xshift = 2pt, yshift = -2pt] m-2-12.south east) --
      ([xshift = 2pt, yshift = 2pt] m-1-12.north east) --
      ([xshift = -2pt, yshift = 2pt] m-1-1.north west);

      \draw[black,dotted]
      (m-1-1.north west) --
      (m-2-1.south west) --
      (m-2-12.south east) --
      (m-1-12.north east) --
      (m-1-1.north west);
    \end{tikzpicture}
  \end{equation*}
  
   We have \(\widehat{\lambda} = (5,5,5,5,5,5,5,4,4)\) and \(\widehat{\mu} = (5,5,5,5,5,5,5)\), shown below.  Notice that $\widehat{\lambda}$ and $\lambda$ (respectively, $\widehat{\mu}$ and $\mu$) cover exactly the same variables from the matrix $Y$ but some different variables from the matrix $X$. 
  
  \begin{equation*}
    V_{\lambda,\mu} = \begin{tikzpicture}[baseline=(current  bounding  box.center)]
      \matrix [matrix of math nodes,left delimiter=(,right delimiter=)] (m)
      {
        x_{11} & x_{12} & x_{13} & x_{14} & x_{15} & x_{16} & x_{17}\\ 
        x_{21} & x_{22} & x_{23} & x_{24} & x_{25} & x_{26} & x_{27}\\
        x_{31} & x_{32} & x_{33} & x_{34} & x_{35} & x_{36} & x_{37}\\
        x_{41} & x_{42} & x_{43} & x_{44} & x_{45} & x_{46} & x_{47}\\
        x_{51} & x_{52} & x_{53} & x_{54} & x_{55} & x_{56} & x_{57}\\
        y_{11} & y_{12} & y_{13} & y_{14} & y_{15} & y_{16} & y_{17}\\
        y_{21} & y_{22} & y_{23} & y_{24} & y_{25} & y_{26} & y_{27}\\
        y_{31} & y_{32} & y_{33} & y_{34} & y_{35} & y_{36} & y_{37}\\
        y_{41} & y_{42} & y_{43} & y_{44} & y_{45} & y_{46} & y_{47}\\
        y_{51} & y_{52} & y_{53} & y_{54} & y_{55} & y_{56} & y_{57}\\
      };

      \draw[black]
      ([xshift = -2pt, yshift = 2pt] m-1-1.north west) --
      ([xshift = -2pt, yshift = -2pt] m-9-1.south west) --
      ([xshift = 2pt, yshift = -2pt] m-9-4.south east) --
      ([xshift = 2pt, yshift = -2pt] m-7-4.south east) --
      ([xshift = 2pt, yshift = -2pt] m-7-5.south east) --
      ([xshift = 2pt, yshift = 2pt] m-1-5.north east) --
      ([xshift = -2pt, yshift = 2pt] m-1-1.north west);

      \draw[black,dotted]
      (m-1-1.north west) --
      (m-7-1.south west) --
      (m-7-5.south east) --
      (m-1-5.north east) --
      (m-1-1.north west);
    \end{tikzpicture}
  \end{equation*}

Here $J = I_{\lambda, \mu}(3,4)$ is the ideal generated by the $3$-minors inside of the solid line of $H_{\lambda, \mu}$, the $2$-minors inside of the dotted line of $H_{\lambda, \mu}$, the $4$-minors inside of the solid line of $V_{\lambda,\mu}$ and the $3$-minors inside of the dotted line of $V_{\lambda,\mu}$.  

The southernmost entry in the easternmost column of \(\lambda - \mu\) is \(y_{44}\). Removing this entry from \(\lambda\) gives \(\widetilde{\lambda} = (12, 12, 11, 10)\), $\widehat{\widetilde{\lambda}} = (5,5,5,5,5,5,4,3)$, and the following matrices.

  \begin{equation*}
    H_{\widetilde{\lambda},\mu} = \begin{tikzpicture}[baseline=(current  bounding  box.center)]
      \matrix [matrix of math nodes,left delimiter=(,right delimiter=)] (m)
      {
        x_{11} & x_{12} & x_{13} & x_{14} & x_{15} & x_{16} & x_{17} & y_{11} & y_{12} & y_{13} & y_{14} & y_{15} & y_{16} & y_{17}\\
        x_{21} & x_{22} & x_{23} & x_{24} & x_{25} & x_{26} & x_{27} & y_{21} & y_{22} & y_{23} & y_{24} & y_{25} & y_{26} & y_{27}\\
        x_{31} & x_{32} & x_{33} & x_{34} & x_{35} & x_{36} & x_{37} & y_{31} & y_{32} & y_{33} & y_{34} & y_{35} & y_{36} & y_{37}\\
        x_{41} & x_{42} & x_{43} & x_{44} & x_{45} & x_{46} & x_{47} & y_{41} & y_{42} & y_{43} & y_{44} & y_{45} & y_{46} & y_{47}\\
        x_{51} & x_{52} & x_{53} & x_{54} & x_{55} & x_{56} & x_{57} & y_{51} & y_{52} & y_{53} & y_{54} & y_{55} & y_{56} & y_{57}\\
      };

      \draw[black]
      ([xshift = -2pt, yshift=2pt] m-1-1.north west) --
      ([xshift = -2pt, yshift=-2pt] m-4-1.south west) --
      ([xshift = 2pt, yshift=-2pt] m-4-10.south east) --
      ([xshift = 2pt, yshift=-2pt] m-3-10.south east) --
      ([xshift = 2pt, yshift=-2pt] m-3-11.south east) --
      ([xshift = 2pt, yshift=-2pt] m-2-11.south east) --
      ([xshift = 2pt, yshift=-2pt] m-2-12.south east) --
      ([xshift = 2pt, yshift=2pt] m-1-12.north east) --
      ([xshift = -2pt, yshift=2pt] m-1-1.north west);

      \draw[black,dotted]
      (m-1-1.north west) --
      (m-2-1.south west) --
      (m-2-12.south east) --
      (m-1-12.north east) --
      (m-1-1.north west);
    \end{tikzpicture}
  \end{equation*}
  
    \begin{equation*}
    V_{\widetilde{\lambda},\mu} = \begin{tikzpicture}[baseline=(current  bounding  box.center)]
      \matrix [matrix of math nodes,left delimiter=(,right delimiter=)] (m)
      {
        x_{11} & x_{12} & x_{13} & x_{14} & x_{15} & x_{16} & x_{17}\\ 
        x_{21} & x_{22} & x_{23} & x_{24} & x_{25} & x_{26} & x_{27}\\
        x_{31} & x_{32} & x_{33} & x_{34} & x_{35} & x_{36} & x_{37}\\
        x_{41} & x_{42} & x_{43} & x_{44} & x_{45} & x_{46} & x_{47}\\
        x_{51} & x_{52} & x_{53} & x_{54} & x_{55} & x_{56} & x_{57}\\
        y_{11} & y_{12} & y_{13} & y_{14} & y_{15} & y_{16} & y_{17}\\
        y_{21} & y_{22} & y_{23} & y_{24} & y_{25} & y_{26} & y_{27}\\
        y_{31} & y_{32} & y_{33} & y_{34} & y_{35} & y_{36} & y_{37}\\
        y_{41} & y_{42} & y_{43} & y_{44} & y_{45} & y_{46} & y_{47}\\
        y_{51} & y_{52} & y_{53} & y_{54} & y_{55} & y_{56} & y_{57}\\
      };

      \draw[black]
      ([xshift = -2pt, yshift = 2pt] m-1-1.north west) --
      ([xshift = -2pt, yshift = -2pt] m-9-1.south west) --
      ([xshift = 2pt, yshift = -2pt] m-9-3.south east) --
      ([xshift = 2pt, yshift = -2pt] m-9-3.north east) --
      ([xshift = 2pt, yshift = -2pt] m-8-4.south east) --
      ([xshift = 2pt, yshift = -2pt] m-8-4.north east) --
      ([xshift = 2pt, yshift = -2pt] m-8-5.north east) --
      ([xshift = 2pt, yshift = 2pt] m-1-5.north east) --
      ([xshift = -2pt, yshift = 2pt] m-1-1.north west);

      \draw[black,dotted]
      (m-1-1.north west) --
      (m-7-1.south west) --
      (m-7-5.south east) --
      (m-1-5.north east) --
      (m-1-1.north west);
      
    \end{tikzpicture}
  \end{equation*}
  
  \bigskip
  
  The matrices $H_{\widetilde{\lambda},\mu}$ and $V_{\widetilde{\lambda},\mu}$ determine the ideal $N = I_{\widetilde{\lambda},\mu}(3,4)$, which is generated by the $3$-minors inside of the solid line of $H_{\widetilde{\lambda}, \mu}$, the $2$-minors inside of the dotted line of $H_{\widetilde{\lambda}, \mu}$, the $4$-minors inside of the solid line of $V_{\widetilde{\lambda},\mu}$ and the $3$-minors inside of the dotted line of $V_{\widetilde{\lambda},\mu}$.  Notice that there exist $3$-minors of $H_{\mu, \lambda}$ involving $y_{44}$, such as the minor of the submatrix whose entries along the main diagonal are $\{y_{22}, y_{33}, y_{44}\}$, that are not in the ideal $N = I_{\widetilde{\lambda},\mu}(3,4)$, and so in this case $N \subsetneq J$. 
  
To construct the matrices corresponding to $I = I_{\widetilde{\lambda},\widetilde{\mu}}(3,4)$, we add to $\mu$ every entry that is strictly northwest of the deleted \(y_{44}\).  We obtain \(\widetilde{\mu} = (12, 12, 10)\), shown below.

\bigskip
  
  \begin{equation*}
    H_{\widetilde{\lambda},\widetilde{\mu}} = \begin{tikzpicture}[baseline=(current  bounding  box.center)]
      \matrix [matrix of math nodes,left delimiter=(,right delimiter=)] (m)
      {
        x_{11} & x_{12} & x_{13} & x_{14} & x_{15} & x_{16} & x_{17} & y_{11} & y_{12} & y_{13} & y_{14} & y_{15} & y_{16} & y_{17}\\
        x_{21} & x_{22} & x_{23} & x_{24} & x_{25} & x_{26} & x_{27} & y_{21} & y_{22} & y_{23} & y_{24} & y_{25} & y_{26} & y_{27}\\
        x_{31} & x_{32} & x_{33} & x_{34} & x_{35} & x_{36} & x_{37} & y_{31} & y_{32} & y_{33} & y_{34} & y_{35} & y_{36} & y_{37}\\
        x_{41} & x_{42} & x_{43} & x_{44} & x_{45} & x_{46} & x_{47} & y_{41} & y_{42} & y_{43} & y_{44} & y_{45} & y_{46} & y_{47}\\
        x_{51} & x_{52} & x_{53} & x_{54} & x_{55} & x_{56} & x_{57} & y_{51} & y_{52} & y_{53} & y_{54} & y_{55} & y_{56} & y_{57}\\
      };
      
      \draw[black]
      ([xshift = -2pt,yshift = 2pt] m-1-1.north west) --
      ([xshift = -2pt,yshift = -2pt] m-4-1.south west) --
      ([xshift = 2pt,yshift = -2pt] m-4-10.south east) --
      ([xshift = 2pt,yshift = -2pt] m-3-10.south east) --
      ([xshift = 2pt,yshift = -2pt] m-3-11.south east) --
      ([xshift = 2pt,yshift = -2pt] m-2-11.south east) --
      ([xshift = 2pt,yshift = -2pt] m-2-12.south east) --
      ([xshift = 2pt,yshift = 2pt] m-1-12.north east) --
      ([xshift = -2pt,yshift = 2pt] m-1-1.north west);
      
      \draw[black,dotted]
      (m-1-1.north west) --
      (m-3-1.south west) --
      (m-3-11.south west) --
      (m-3-11.north west) --
      (m-3-12.north east) --
      (m-1-12.north east) --
      (m-1-1.north west);
    \end{tikzpicture}
  \end{equation*}
  
  \bigskip

And when we add all variables strictly northwest of $y_{44}$ to $\widehat{\mu}$, we get

\bigskip
  
    \begin{equation*}
    V_{\widetilde{\lambda},\widetilde{\mu}} = \begin{tikzpicture}[baseline=(current  bounding  box.center)]
      \matrix [matrix of math nodes,left delimiter=(,right delimiter=)] (m)
      {
        x_{11} & x_{12} & x_{13} & x_{14} & x_{15} & x_{16} & x_{17}\\ 
        x_{21} & x_{22} & x_{23} & x_{24} & x_{25} & x_{26} & x_{27}\\
        x_{31} & x_{32} & x_{33} & x_{34} & x_{35} & x_{36} & x_{37}\\
        x_{41} & x_{42} & x_{43} & x_{44} & x_{45} & x_{46} & x_{47}\\
        x_{51} & x_{52} & x_{53} & x_{54} & x_{55} & x_{56} & x_{57}\\
        y_{11} & y_{12} & y_{13} & y_{14} & y_{15} & y_{16} & y_{17}\\
        y_{21} & y_{22} & y_{23} & y_{24} & y_{25} & y_{26} & y_{27}\\
        y_{31} & y_{32} & y_{33} & y_{34} & y_{35} & y_{36} & y_{37}\\
        y_{41} & y_{42} & y_{43} & y_{44} & y_{45} & y_{46} & y_{47}\\
        y_{51} & y_{52} & y_{53} & y_{54} & y_{55} & y_{56} & y_{57}\\
      };
      \draw[black]
      ([xshift = -2pt, yshift = 2pt] m-1-1.north west) --
      ([xshift = -2pt, yshift = -2pt] m-9-1.south west) --
      ([xshift = 2pt, yshift = -2pt] m-9-3.south east) --
      ([xshift = 2pt, yshift = -2pt] m-9-3.north east) --
      ([xshift = 2pt, yshift = -2pt] m-8-4.south east) --
      ([xshift = 2pt, yshift = -2pt] m-8-4.north east) --
      ([xshift = 2pt, yshift = -2pt] m-8-5.north east) --
      ([xshift = 2pt, yshift = 2pt] m-1-5.north east) --
      ([xshift = -2pt, yshift = 2pt] m-1-1.north west);

      \draw[black,dotted]
      (m-1-1.north west) --
      (m-8-1.south west) --
      (m-8-3.south east) --
      (m-7-3.south east) --
      (m-7-5.south east) --
      (m-1-5.north east) --
      (m-1-1.north west);
    \end{tikzpicture}.
  \end{equation*}
\end{example}

Notice also that the stage of the induction that will call for the removal of $y_{3,4}$ from $I_{\widetilde{\lambda},\widetilde{\mu}}(3,4)$ will not actually change the defining ideal of the variety because every $3$-minor of $H_{\widetilde{\lambda}}$ involving $y_{3,4}$ is already in the ideal of $2$-minors of $H_{\widetilde{\mu}}$ and the same for the $4$-minors of $V_{\widetilde{\lambda}}$ involving $y_{3,4}$ and the $3$-minors of $V_{\widetilde{\mu}}$.

\begin{subsection}{A dimension formula}
\begin{corollary}\label{dimensionFormula}
Let $V_r$ be the double determinantal variety associated to the $m \times n$ matrices $X_1, \ldots, X_{r}$ of distinct indeterminates and minor sizes $s$ and $t$ as in Theorem \ref{generalCase}.  Call the defining ideal $J_r$ with $2 \leq r$.  Assume without loss of generality that $s \leq t$.  Then \begin{align*}
\height(J_r) &= [(m-s+1)(n-s+1)]+[(r-1)(n)(m-s+1)]\\
&+[(n-t+1){\sum_{q=2}^r }(s-1)-\operatorname{max}\{0,t-(s-1)(q-1)-1\}].
\end{align*}  In particular, considering $V_r$ as a projective variety, we have \ \begin{align*}
\dim(V_r) = (s-1)(m+n-s+1)&+[(t-1) {\sum_{q=2}^r} (s-1)-\operatorname{max}\{0,t-(s-1)(q-1)-1\}]\\
&+[n {\sum_{q=2}^r} \operatorname{max}\{0,t-(s-1)(q-1)-1\}].
\end{align*}
\end{corollary}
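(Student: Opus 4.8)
The plan is to pass to the squarefree monomial initial ideal, compute there, and use a geometric argument as a cross-check. By Theorem~\ref{generalCase} the natural generators of $J_r$ form a Gr\"obner basis under any diagonal term order, so $\init(J_r)$ is the squarefree monomial ideal generated by the main diagonals of those generators, and $R/J_r$ and $R/\init(J_r)$ have the same Hilbert function; in particular $\height(J_r)=\height(\init(J_r))=rmn-\dim(R/\init(J_r))$ since the ambient ring has $rmn$ variables, and the stated dimension formula will then follow from the height formula by a routine rearrangement (with the customary shift by $1$ for the projective convention). Since $\init(J_r)$ is squarefree it is the Stanley--Reisner ideal of a complex $\Delta$ on the $rmn$ variables, viewed simultaneously as the cells of the $m\times rn$ grid underlying $H$ and of the $rm\times n$ grid underlying $V$, and $\dim(R/\init(J_r))=1+\dim\Delta$ is the largest size of a set $F$ of cells containing no main diagonal of a natural generator --- equivalently, containing no strictly increasing chain of length $s$ in the $H$-grid and none of length $t$ in the $V$-grid. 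So everything reduces to computing this maximum.

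To compute it I would first exhibit an extremal $F$ built block by block. Within one $m\times n$ block the relevant chains are ordinary increasing chains, but a chain crossing from block $q$ into a later block need only increase in the row coordinate of $H$ (respectively the column coordinate of $V$) at the crossing; hence the first block behaves like a stand-alone ideal of $s$-minors of an $m\times n$ matrix, and --- together with the column-space restrictions it forces on the later blocks --- should contribute the $(m-s+1)(n-s+1)+(r-1)n(m-s+1)$ part of the height, while in block $q\geq 2$ the $V$-chains already passing through the first $q-1$ blocks have used up part of the length-$t$ budget, leaving block $q$ forbidden only from an $H$-chain of length $s$ and a $V$-chain of length $t-(q-1)(s-1)$ when the latter is positive. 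Counting the largest such region and telescoping produces the remaining term, which I expect to equal $(n-t+1)\sum_{q=2}^{r}\max\{0,\,(s-1)-\max\{0,t-(s-1)(q-1)-1\}\}=(n-t+1)\max\{0,\,r(s-1)-(t-1)\}$. Proving optimality is the core. Two routes suggest themselves: a direct exchange argument, decomposing any feasible $F$ into antichains via Mirsky's theorem and bounding block by block how many cells of each antichain can survive both chain conditions; or, more in the style of the paper, unwinding the liaison induction of Theorem~\ref{generalCase}, where each step is a basic double $G$-link $\init(J)=\init(N)+y_{i,j}\init(I)$, forcing $\height(\init(N))=\height(\init(J))+1$ and $\height(\init(I))=\height(\init(J))$; following these relations through the whole induction tree, with the mixed ladder determinantal base cases supplying the starting values via \cite{GM00}, and adding the increments gives the closed form.

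The step I expect to be the main obstacle is this last accounting. The induction tree of Theorem~\ref{generalCase} is delicate: removing a variable $y_{i,j}$ both shrinks one Young diagram and enlarges the next with a northwest rectangle, and some ``degenerate'' steps (those with $I_{\overline{\Lambda}}(s,t)=I_{\Lambda}(s,t)$) contribute nothing; one must check that, over the non-degenerate steps, the $+1$ height increments occur exactly the right number of times to produce each of the three displayed terms, and in particular that the rectangular augmentations are charged to the correct matrix index $q$ so that the $\max\{0,\dots\}$ terms come out as stated rather than over- or under-counted.

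As a check one can compute $\dim V_r$ geometrically. Since $V_r$ is irreducible, its dimension is the codimension count read off from its dense rank stratum, on which $\operatorname{rank}(H)=s-1$ forces all $\operatorname{colspace}(X_q)$ into a common $(s-1)$-dimensional subspace --- exactly the codimension $(m-s+1)(n-s+1)+(r-1)n(m-s+1)$ of the rank-$(s-1)$ locus of the $m\times rn$ matrix $H$ --- while the further requirement that $\sum_q\operatorname{rowspace}(X_q)$ have dimension at most $t-1$ imposes codimension $(n-t+1)\max\{0,\,r(s-1)-(t-1)\}$, arising from forcing $r$ generic $(s-1)$-dimensional subspaces of $K^n$ into a common $(t-1)$-dimensional subspace. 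Matching this with the monomial computation both confirms the formula and makes transparent where the $\max\{0,\dots\}$ originates.
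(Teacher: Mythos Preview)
Your third route—unwinding the liaison induction and summing the height increments—is precisely the paper's argument, and it is simpler than you fear. There is no tree: the paper follows only the $J\to N$ branch, never $J\to I$, so the Young-diagram augmentations $\widetilde{\lambda}^{\beta+1}$ never enter. Concretely, one removes the southeast variable of $\lambda^0$ repeatedly, passing through $(r-1)mn$ steps and landing on the classical determinantal ideal $I_s(X_1)$ of height $(m-s+1)(n-s+1)$. At each step the relation is $\height(J)=\height(N)+1$ when non-degenerate and $\height(J)=\height(N)$ otherwise (you have this backwards: $N\subseteq J$, so it is $J$ whose height is one larger). The whole accounting is then a direct characterization of degeneracy: removing $x^q_{i,j}$ is degenerate exactly when $i<s$ (so no $s$-minor of $H$ has it as southeast entry) \emph{and} either $j<t$ or $i<t-(s-1)(q-1)$ (so any $t$-minor of $V$ with it as southeast entry either doesn't exist or already lies in $I_s(H)$ by pigeonhole on the rows among $X_1,\dots,X_{q-1}$). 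Counting the complementary non-degenerate positions block by block gives the three displayed terms.

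Your Stanley--Reisner route (1) and geometric route (4) are genuinely different and both sound; neither appears in the paper. The maximal-face computation is equivalent once one sees that the non-degenerate positions above are exactly the cells that must be excluded, and your rank-stratum count is a clean independent check. Your telescoped closed form $(n-t+1)\max\{0,\,r(s-1)-(t-1)\}$ is in fact what the geometric argument produces and what the correct per-block count yields; the paper's displayed summand $(s-1)-\max\{0,t-(s-1)(q-1)-1\}$ should carry an outer nonnegativity (it can be negative for small $q$ when $t>s$ is large), and your formula has that built in.
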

\begin{proof}
Following the induction outlined in Theorem \ref{generalCase} by the path of Young diagrams with $k = 1$ and $\lambda^0(0) = (2n, \ldots, 2n)$, $\lambda^0(1) = (2n, \ldots, 2n, 2n-1)$, $\ldots$, $\lambda^0(mn(r-1)) = (n, \ldots, n)$, where $\lambda^0(a)$ denotes the Young diagram we obtain after $a$ removals of the southernmost entry from easternmost column of the final matrix, i.e., after $a$ stages of the induction outlined in Theorem \ref{generalCase} always choosing to transition from the matrix $J$ to the matrix $N$.  Here $J_r = I_{\lambda^0(0)}(s,t)$.  

Call the variable being removed at the $w^{th}$ stage $x^q_{i,j}$.  Notice that if $i<s$, then there are no $s$-minors of $H_{\lambda^0(w)}$ having $x^q_{i,j}$ as the final entry along its main diagonal.  Similarly, if $1<q$ and $(s-1)(q-1)<t-i$, then every $t$-minor with $x^q_{i,j}$ as the final entry along its main diagonal in $V_{\lambda^0(w)}$ is in the ideal generated by the $s$-minors of $H_{\lambda^0(w)}$ because every such $t$-minor must involve at least $s$ rows from some matrix $X_{q'}$ with $q' < q$.  Moreover, if $j<t$, then there are no $t$-minors of $V_{\lambda^0(w)}$ involving $x^q_{i,j}$.  Therefore, if both $i<s$ and also $i<t-(s-1)(q-1)$ and $j<t$, then $I_{\lambda^0(w+1)}(s,t) = I_{\lambda^0(w)}(s,t)$.  Otherwise, $\height(I_{\lambda^0(w)}(s,t))=\height(I_{\lambda^0({w+1})}(s,t))+1$, as discussed in the proof of Theorem \ref{generalCase}.  In particular, $J_r$ has height $(r-1)(m-s)+(n-t+1)\sum_{q=2}^r \operatorname{max}\{0,m-(t-(s-1)(q-1))+1\}$ greater than that of $I_{\lambda^0(mn(r-1))}(s,t)$.  But $I_{\lambda^0(mn(r-1))}(s,t)$ is simply the determinantal ideal of the $s$-minors of the matrix $X_1$, whose height is $(m-s+1)(n-s+1)$.  Therefore, \begin{align*}
\height(J_r)&= \height I_{\lambda^0(mn(r-1))(s,t)}+(r-1)(m-s)+{\sum_{q=2}^r} (s-1)-\operatorname{max}\{0,t-(s-1)(q-1)-1\}\\
&= [(m-s+1)(n-s+1)]+[(r-1)(n)(m-s+1)]\\
&+[(n-t+1){\sum_{q=2}^r} (s-1)-\operatorname{max}\{0,t-(s-1)(q-1)-1\}].
\end{align*}  The dimension claim is then immediate from simple algebra.
\end{proof}
\end{subsection}

One could alternatively count $\dim(V_r)$ as the dimension of the variety of $s$-minors in one $m \times n$ matrix of indeterminates plus the number of indeterminates $x^q_{i,j}$ with $q>1$ not appearing as the final element of the leading term of any of the degree $s$ generators of $J$ or any of the degree $t$ generators in the ideal generated by the degree $s$ generators.  

\bigskip

\begin{example}
If $r = m = n = t = 3$ and $s = 2$, denote by $X$, $Y$, and $Z$ the matrices used to define $V_3$.  Call their horizontal concatenation $H$ and vertical $V$.  We compute $\dim(V_3) = (1)(5)+2+3-1$, where $5-1$ is the dimension of the (projective) variety of $2$-minors in $X$ and $2+3$ counts $z_{11}$, $z_{12}$, $y_{11}$, $y_{12}$, and $y_{13}$ as the variables not appearing as the final element of any main diagonal of a $2 \times 2$ submatrix of $H$ or a $3 \times 3$ submatrix of $V$ that is not in the ideal of $2$-minors of $H$.

  \begin{equation*}
    V = \begin{tikzpicture}[baseline=(current  bounding  box.center)]
      \matrix [matrix of math nodes,left delimiter=(,right delimiter=)] (m)
      {
        x_{11} & x_{12} & x_{13} \\ 
        x_{21} & x_{22} & x_{23} \\
        x_{31} & x_{32} & x_{33} \\
        \mathbf{y_{11}} & \mathbf{y_{12}} & \mathbf{y_{13}} \\
        y_{21} & y_{22} & y_{23} \\
        y_{31} & y_{32} & y_{33} \\
         \mathbf{z_{11}} & \mathbf{z_{12}} & z_{13} \\
        z_{21} & z_{22} & z_{23} \\
        z_{31} & z_{32} & z_{33} \\
      };
    \end{tikzpicture}
  \end{equation*}
\end{example}

\begin{corollary}
Double determinantal varieties are normal and irreducible.
\end{corollary}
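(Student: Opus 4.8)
The plan is to obtain both properties at once from Serre's criterion for normality. By Theorem~\ref{generalCase}, $R/J$ is Cohen--Macaulay, hence satisfies Serre's condition $(S_2)$; and $J$ is homogeneous, so $\operatorname{Spec}(R/J)$ is connected, being a cone. It therefore suffices to verify $(R_1)$, i.e.\ that $R/J$ is regular in codimension one: then $(R_1)$ and $(S_2)$ force $R/J$ to be a finite product of normal domains, and connectedness collapses this to a single normal domain, which is exactly the assertion that a double determinantal variety is normal and irreducible. (Irreducibility also follows independently from the parametrization below, which is reassuring and is used in the verification of $(R_1)$.)

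Keeping the convention $s\le t$, consider the morphism
\[
\Phi\colon \mathbb{A}^{m(s-1)}\times\mathbb{A}^{r(s-1)(t-1)}\times\mathbb{A}^{(t-1)n}\longrightarrow \mathbb{A}^{rmn},\qquad (U,(C_q)_q,W)\longmapsto (UC_qW)_q,
\]
where $U$ has size $m\times(s-1)$, each $C_q$ has size $(s-1)\times(t-1)$, and $W$ has size $(t-1)\times n$. A tuple $(X_q)$ lies on $V_r$ exactly when $\operatorname{rank} H\le s-1$ and $\operatorname{rank} V\le t-1$; given such a tuple, choose $U$ whose columns span an $(s-1)$-dimensional subspace containing the column span of $H$, so $X_q=UA_q$ for some $A_q$, and then, since $U$ has full column rank and the rows of the $A_q$ span the row span of $V$ (a space of dimension $\le t-1$), choose $W$ whose rows span a $(t-1)$-dimensional subspace containing them, so $A_q=C_qW$. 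Hence $\Phi$ surjects onto $V_r$, so $V_r$ is irreducible; with the radicality of $J$ from Theorem~\ref{generalCase}, this gives that $J$ is prime.

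To verify $(R_1)$ I would first dispose of a degenerate regime: if $r(s-1)\le t-1$, then $\operatorname{rank} H\le s-1$ already forces $\operatorname{rank} V\le\sum_q\operatorname{rank}(X_q)\le r(s-1)\le t-1$, so set-theoretically $V_r=V(I_s(H))$, hence (again using radicality) $J=I_s(H)$ is a classical determinantal ideal and $R/J$ is normal by \cite{HE71}; the case $s=1$ is trivial. So assume $s\ge2$ and $r(s-1)\ge t$, when both rank conditions are genuinely active. Stratify $V_r$ by the ranks of $H$ and $V$ and set $V_r^{\circ}=\{(X_q)\in V_r:\operatorname{rank} H=s-1,\ \operatorname{rank} V=t-1\}$, a nonempty, hence dense, open subset of the irreducible $V_r$, with complement $V_r\setminus V_r^{\circ}=\bigl(\{\operatorname{rank} H\le s-2\}\cup\{\operatorname{rank} V\le t-2\}\bigr)\cap V_r$. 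One then checks: (i) $V_r^{\circ}$ is smooth --- the restriction of $\Phi$ to the locus in its source where $U$, $W$, the horizontal concatenation of the matrices $C_qW$, and the vertical concatenation of the matrices $UC_q$ all have maximal rank maps onto $V_r^{\circ}$ as a smooth (hence faithfully flat) surjection with $\operatorname{GL}_{s-1}\times\operatorname{GL}_{t-1}$-torsor fibers, so $V_r^{\circ}$ is regular, hence smooth since $K$ is perfect (equivalently, one verifies directly that the differential of $\Phi$ is surjective onto the Zariski tangent space of $V_r$, a characteristic-free computation parallel to the classical matrix-multiplication parametrization; or one applies the Jacobian criterion to the natural generators on $V_r^{\circ}$); and (ii) $V_r\setminus V_r^{\circ}$ has codimension at least $2$ in $V_r$ --- each of $\{\operatorname{rank} H\le s-2\}\cap V_r$ and $\{\operatorname{rank} V\le t-2\}\cap V_r$ is again a double determinantal variety with one minor size decreased (possibly one that has slipped into the degenerate regime, hence classical determinantal, or, when a minor size reaches $0$, simply the origin), so Corollary~\ref{dimensionFormula} gives its dimension (with direct inspection in the edge cases, using $\dim V_r\ge2$), and one checks the drop is at least $2$. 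Since $\operatorname{Sing}(V_r)\subseteq V_r\setminus V_r^{\circ}$, this gives $(R_1)$.

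The main obstacle is step (ii). Extracting from the dimension formula of Corollary~\ref{dimensionFormula} that decreasing $s$ by one, or $t$ by one, raises the height of the ideal by at least two requires a somewhat intricate case analysis --- organized by whether each sub-locus remains genuinely double determinantal or collapses to a classical determinantal variety, and by the boundary cases where a minor size equals $1$ or $0$ --- in which the $\max\{0,\dots\}$ terms of the formula, encoding the interaction of the two rank conditions, are what make the bookkeeping delicate. This is precisely the analogue, one level up, of the classical fact that the singular locus of a determinantal variety is the next smaller determinantal variety, of codimension at least $2$. The characteristic-free verification of smoothness in (i) is a secondary point, routine once the differential of $\Phi$ is written out, but worth spelling out since $K$ is only assumed perfect and generic smoothness is unavailable.
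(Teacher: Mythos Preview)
Your argument is correct and shares its skeleton with the paper's: both verify $(R_1)$ via Serre's criterion, pulling $(S_2)$ from the Cohen--Macaulay property of Theorem~\ref{generalCase} and using Corollary~\ref{dimensionFormula} to bound the codimension of the singular locus. The substantive difference is in how you locate the singular locus. You build an explicit parametrization $\Phi$, use it to prove irreducibility up front, and then argue that $V_r^\circ$ is smooth via the $\operatorname{GL}_{s-1}\times\operatorname{GL}_{t-1}$-torsor structure; the complement is then the union of the two ``next smaller'' double determinantal varieties, and you invoke the dimension formula for the codimension-$2$ estimate. The paper instead appeals directly to the Jacobian criterion, asserting that the Jacobian ideal of $J$ is (generated by) the $(s-1)$-minors of $H$ together with the $(t-1)$-minors of $V$, and reads the height bound off Corollary~\ref{dimensionFormula}; irreducibility comes only at the end, from the elementary fact that a graded normal ring is a domain. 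Your route buys a geometric picture (the multiplication map is genuinely illuminating and gives irreducibility independently of normality), at the cost of extra machinery and the case analysis you flag in step~(ii); the paper's route is shorter but correspondingly terse on why the Jacobian ideal has the claimed form and on the height computation. Both proofs leave the same arithmetic with the dimension formula as the residual work.
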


\begin{proof}
Let $V_r$ be the double determinantal variety corresponding to the matrices $X_1, \ldots, X_r$ and rank restrictions $s$ and $t$ as in Theorem \ref{generalCase}, and call its homogeneous coordinate ring $S = R/J$, where $J$ is the ideal generated by the $s$-minors of $H$, the horizontal concatenation of $X_1, \ldots, X_r$, together with the $t$-minors of $V$, the vertical concatenation of the same.  Recall that we have defined double determinantal varieties only over a perfect field, and notice that the Jacobian ideal of $J$ is generated by the $(s-1)$-minors of $H$ together with the $(t-1)$-minors of $V$.  It follows from Corollary \ref{dimensionFormula} that the height of this Jacobian ideal is strictly greater than $1$ in $R/J$, and so $R/J$ is regular in codimension $1$.  Because $R/J$ is Cohen-Macaulay by Theorem \ref{generalCase}, it satisfies Serre's condition $S_2$.  But a ring that is both $R_1$ and $S_2$ is normal.  It is immediate the a graded ring that is normal is a domain, which is to say that $V_r$ is irreducible.  
\end{proof}


\end{document}